\newtheorem{thm}{Theorem}[section]
\newtheorem{cor}[thm]{Corollary}
\newtheorem{prop}[thm]{Proposition}
\theoremstyle{definition}
\newtheorem{defn}[thm]{Definition}
\theoremstyle{remark}
\numberwithin{equation}{section}
\newcommand{\D}{\mathcal D}
\newcommand{\R}{\mathbb R}
\newcommand{\N}{\mathbb N}
\newcommand{\F}{\mathcal{F}}
\definecolor{darkblue}{rgb}{0.05, .05, .65}
\begin{document}
\title[]{Fractional Lane-Emden Hamiltonian systems}
	
\author[I. Ceresa Dussel, J. Fern\'andez Bonder, N. Saintier and A. Salort]{Ignacio Ceresa Dussel, Juli\'an Fern\'andez Bonder, Nicolas Saintier and Ariel  Salort}
	
\address{Instituto de C\'alculo, CONICET\\
Departamento de Matem\'atica, FCEN - Universidad de Buenos Aires\\
Ciudad Universitaria, 0+$\infty$ building, C1428EGA, Av. Cantilo s/n\\
Buenos Aires, Argentina}

\email[ICD]{iceresad@dm.uba.ar}
\email[JFB]{jfbonder@dm.uba.ar}
\email[NS]{nsaintier@dm.uba.ar}
\email[AS]{asalort@dm.uba.ar}
	
\begin{abstract}
In this work, our interest lies in proving the existence of solutions to the following Fractional Lane-Emden Hamiltonian system:
$$
\begin{cases}
(-\Delta)^s u = H_v(x,u,v) & \text{in }\Omega,\\
(-\Delta)^s v = H_u(x,u,v) & \text{in }\Omega,\\
u=v=0 & \text{in } \R^n\setminus\Omega.
\end{cases}
$$
The method, that can be traced back to the work of De Figueiredo and Felmer \cite{DF-F}, is flexible enough to deal with more general nonlocal operators and make use of a combination of fractional order Sobolev spaces together with functional calculus for self-adjoint operators.
\end{abstract}
	
\subjclass[2020]{35J50, 35J47, 35D30, 35R11}
	
	
\keywords{Lane-Emden systems, fractional operators, semilinear elliptic systems}

\maketitle

\section{Introduction}

Given a bounded and smooth domain $\Omega\subset \R^n$, $n\geq 3$ and suitable functions $f$ and $g$,  existence and non-existence of solutions to Hamiltonian systems of the form
\begin{align} \label{LE5'}
\begin{cases}
-\Delta u = f(v) &\text{ in } \Omega\\
-\Delta v = g(u) &\text{ in } \Omega\\
u=v=0 &\text{ on } \partial\Omega,
\end{cases}
\end{align}
was studied in \cite{Clement-DFigeiredo-Mitidieri, DF-F,HV93,M93}. These articles provide for a  suitable framework to deal with this kind of systems with strongly indefinite variational structure. The prototypical case of \eqref{LE5'} was studied in \cite{Clement-DFigeiredo-Mitidieri} and it is given when $f(v)=v^{p-1}$ and $g(u)=u^{q-1}$, where  $u$ and $v$ are positive functions:
\begin{align} \label{LE5''}
\begin{cases}
-\Delta u = v^{p-1} &\text{ in } \Omega\\
-\Delta v = u^{q-1} &\text{ in } \Omega\\
u=v=0 &\text{ on } \partial\Omega.
\end{cases}
\end{align}
The authors proved existence of at least positive solutions of this system when 
\begin{align} \label{CondExp}
\frac{n-2}{n}<\frac{1}{p}+\frac{1}{q}<1.
\end{align}
On the contrary, when this condition is not fulfilled,  it was proved in \cite{M93} that \eqref{LE5''} does not admit positive solutions if the domain $\Omega$ is smooth and star-shaped. Observe that this problem  can be thought of as a natural extension of the Lane-Emden equation for $u\geq 0$:
\begin{equation} \label{LE}
-\Delta u = u^{p-1} \text{ in }\Omega, \quad u=0 \text{ in } \partial\Omega,
\end{equation}
which (among many variations) was widely studied and it is well known that its solvability is strongly related with the Sobolev inequality.  Indeed it follows from the classical Pohozaev identity  that \eqref{LE} admits a positive solution if and only if $p<2^*=\frac{2n}{n-2}$.

Solutions to \eqref{LE5'} are obtained as critical points of the Lagrangian
\begin{align} \label{FunctionalLocal}
\mathcal{L}(u,v)=\int_\Omega \nabla u\cdot \nabla v\,dx -\int_\Omega F(v)\,dx-\int_\Omega G(u)\,dx
\end{align}
where $F$ and $G$ are  primitives of $f$ and $g$. In contrast with gradient systems, the functional $\mathcal{A}(u,v)=\int_\Omega \nabla u\cdot\nabla v \,dx$, i.e. the quadratic part of $\mathcal{L}$,  is strongly indefinite, in the sense that $(0,0)$ is a saddle point for $\mathcal{A}$ with infinite Morse index. Therefore any decomposition of the solution space into two subspace $H_1$ and $H_2$ such that $\mathcal{A}$ restricted to $H_1$ has a minimum in $(0,0)$, and $\mathcal{A}$ restricted to $H_2$ has a maximum in $(0,0)$, imply that $H_1$ and $H_2$ are infinite dimensional. Hence neither Ambrosetti-Rabinowitz' Mountain Pass Theorem \cite{AR} (which typically assumes a minimum at the origin) nor Rabinowitz' Saddle Point Theorem \cite{R78} (which requires one of the subspaces $H_1$ or $H_2$ to be finite dimensional), can be applied in this case.

To deal with this kind of problems involving strongly indefinite functionals,  some generalizations of the mountain pass and saddle point theorems were developed. We mention for instance the abstract theorems in \cite{BR}, \cite{H83} and \cite{F}.  By means of the topological min-max  approach developed in the above mentioned articles, existence and regularity results for solutions to \eqref{LE5''} were obtained in \cite{Clement-DFigeiredo-Mitidieri, DF-F, HV93, M93}.

Later, more general Hamiltonian system of the  form
\begin{align} \label{LE5}
\begin{cases}
-\Delta u = H_v(x,u,v) &\text{ in } \Omega\\
-\Delta v = H_u(x,u,v) &\text{ in } \Omega\\
u=v=0 &\text{ on } \partial\Omega
\end{cases}
\end{align}
were studied in \cite{DF-F}, where $H$ includes in particular the prototypical Hamiltonian $H(x,u,v)=|u|^p + |v|^q$ corresponding to \eqref{LE5''} with exponents $p, q$ satisfying \eqref{CondExp}. In this case, solutions were obtained in \cite{DF-F} by applying a variational approach using a generalized Mountain Pass Theorem stated in \cite{F} in the context of Hamiltonian systems, which allows to deal with the strongly indefinite structure of the energy functional related to \eqref{LE5}.

On the other hand, the last years experimented an  increasing attention for non-nocal problems due to their many applications to models of diverse branches of science.  One of the most studied nonlocal operator is the well-known fractional Laplacian defined for $s\in (0,1)$ by
$$
(-\Delta)^s u(x)=\text{p.v.  } C(n,s) \int_{\R^n}\frac{u(x)-u(y)}{|x-y|^{n+2s}}\,dy,
$$
where the constant $C(n,s)$ is positive and depends only on $n$ and $s$.  From now on we will omit for ease of notation.  A non local variant of the  classical Lane-Emden system  \eqref{LE5''}, namely 
\begin{align} \label{LE3s}
\begin{cases}
(-\Delta u)^s = v^{p-1} &\text{ in } \Omega,\\
(-\Delta v)^s = u^{q-1} &\text{ in } \Omega,\\
u=v=0 &\text{ in } \mathbb{R}^n\backslash\Omega,
\end{cases}
\end{align}
where $\Omega\subset \R^n$ is a bounded smooth domain in $\R^n$ and  $n> 2s$, was  studied in \cite{Leite-Montenegro} in the context of positive viscosity solutions. Solutions to \eqref{LE3s} are obtained as critical points of Lagrangian 
\begin{equation} \label{func1}
\mathcal{L}(u,v)= \frac12 \int_{\R^n}\int_{\R^n}\frac{ (u(x)-u(y))(v(x)-v(y))}{|x-y|^{n+2s}}\,dxdy  - \frac{1}{p}\int_\Omega v^p\,dx - \frac{1}{q}\int_\Omega u^q \,dx, 
\end{equation}
which, as in the local case \eqref{FunctionalLocal}, turns out to be strongly indefinite.  The authors  in \cite{Leite-Montenegro} proved that when $p,q>1$ are such that
\begin{equation} \label{CondExpNL}
\frac{n-2s}{n}<\frac{1}{p}+\frac{1}{q} <1
\end{equation}
then \eqref{LE3s} admits at least one positive viscosity solution, and no positive viscosity solution is admitted when the previous condition does not hold and the domain is star-shaped.  Observe that assumption \eqref{CondExpNL} simplifies to \eqref{CondExp} when $s=1$.  Moreover \eqref{LE3s} can be seen as a generalization of the fractional Lane-Emden problem
$$
(-\Delta)^s u=u^{p-1} \text{ in } \Omega, \quad u=0 \text{ in } \R^n \setminus \Omega
$$
which by standard variational methods admits a positive energy weak solution if $1<p<2n/(n-2s)$, ($p\neq 2$). 

In order to prove existence of positive solutions to \eqref{LE3s}, the idea  in \cite{Leite-Montenegro}  is to observe that $v=((-\Delta u)^s)^\frac{1}{p-1}$. So, plugging this expression into the second equation in \eqref{LE3s},  reduces the problem to find critical points of a functional depending only on $u$ instead of the functional \eqref{func1}. This method however does not seem to be adaptable to treat more general Lane-Emden system with Hamiltonian $H$.  

Inspired by \cite{DF-F} we consider in this work the following nonlocal version of \eqref{LE5}
\begin{equation}\label{P}
\begin{cases}
(-\Delta)^s u = H_v(x,u,v) & \text{in }\Omega,\\
(-\Delta)^s v = H_u(x,u,v) & \text{in }\Omega,\\
u=v=0 & \text{in } \R^n\setminus\Omega,
\end{cases}
\end{equation}
where $\Omega\subset \R^n$ is an open bounded set with smooth boundary and the so-called Hamiltonian function $H\colon \Omega\times\R^2\to \R$ is of class $C^1$ and satisfies some growths assumptions stated below. 

We remark  that Hamiltonian systems of the form \eqref{P} involving fractional Laplacians of different order, let us say, $s,t\in(0,1)$, lead to non-variational related energy functionals. As far as we know, the techniques introduced in the literature do not seem to be applicable, so we restrict ourselves to the case of two fractional Laplacians with the same order $s$.

The main goal of this article will be to obtain solutions to \eqref{P}. We want to emphasize  that our approach strongly relies on the ideas of \cite{DF-F}.  More precisely, solutions of \eqref{P} will be found as critical points of an energy functional similar to \eqref{func1} which, as was observed in the local case, is strongly indefinite. In contrast with the prototypical system \eqref{LE3s} considered in \cite{Leite-Montenegro}, it seems difficult in general to reduce \eqref{P}  to a functional depending solely on $u$. Instead, we will set up  a suitable functional framework to obtain solutions by means of the generalized Mountain Pass Theorem obtained in \cite{F}.

We introduce now our assumptions on the Hamiltonian system \eqref{P}.  First let us fix some $s\in (0,1)$  and real numbers $p,q>1$ satisfying  \eqref{CondExpNL} namely 
\begin{align} \label{condpq.0} \tag{$pq_0$}
1-\frac{2s}{n} <\frac{1}{p}+\frac{1}{q}<1.
\end{align}
Furthermore, if $n>4s$ we also impose 
\begin{equation}\label{condpq.1} \tag{$pq_1$}
\frac{1}{p}> \frac{n-4s}{2n} \quad \text{ and }\quad  \frac{1}{q}> \frac{n-4s}{2n}.
\end{equation}
We show in Figure \ref{fig:imag1} the region of $(p,q)$ satisfying these assumptions.  

\begin{figure}
\centering
\includegraphics[width=0.9\linewidth]{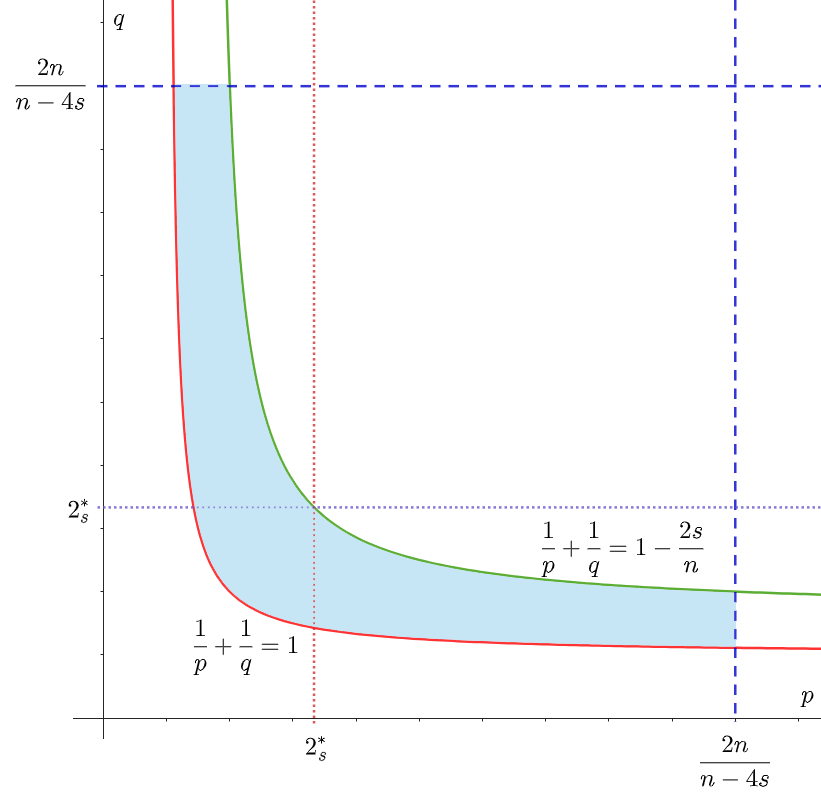}
\caption{Range of admissible $p$ and $q$ in terms of $s$ and $n$.}
\label{fig:imag1}
\end{figure}

We further assume the following conditions on the Hamiltonian:
\begin{align} \label{H0} \tag{$H_0$}
H\colon  \Omega \times \R^2 \to \R \text{ is of class } C^1,
\end{align}
and there exist $r>0$ and $c_1>0$ such that for all $x\in \overline \Omega$, 
\begin{align} \label{H1} \tag{$H_1$}
\frac1p H_u(x,u,v)u + \frac1q H_v(x,u,v)v \ge  H(x,u,v)>0 \quad \text{ if } |(u,v)|\ge r,
\end{align}
\begin{align} \label{H2} \tag{$H_2$}
H(x,u,v)\le c_1(|u|^p+|v|^q) \quad \text{ if } |(u,v)|\le r, 
\end{align}
and for all $u,v\in\R$, 
\begin{align} \label{H3} \tag{$H_3$}
\begin{split}
|H_u(x,u,v)|&\le c_1(|u|^{p-1} + |v|^{(p-1)q/p}+1)\\
|H_v(x,u,v)|&\le c_1(|u|^{(q-1)p/q} + |v|^{q-1}+1).
\end{split}
\end{align}

Our main result establishes the existence of solutions to \eqref{P}.
\begin{thm}\label{MainThm}
Let $s\in (0,1)$ and let $p,q>1$ be such that \eqref{condpq.0} and \eqref{condpq.1} hold. Assume that $H$ satisfies \eqref{H0}-\eqref{H3}. Then there exists $\theta=\theta(p,q,n,s)\in (0,2)$ and a weak solution $(u,v)$ to \eqref{P} such that  $(u,v)\in H^{s\theta}_0(\Omega)\times H^{s(2-\theta)}_0(\Omega)$ to \eqref{P}.
\end{thm}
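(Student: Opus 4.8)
\emph{Proof strategy.} The plan is to follow the variational scheme of De Figueiredo--Felmer \cite{DF-F}, transplanted to the nonlocal setting by replacing the Hilbert scale generated by $-\Delta$ with the one generated by $A:=(-\Delta)^s$ on $\Omega$ with exterior Dirichlet condition. Since $\Omega$ is bounded, $A$ is positive, self-adjoint and has compact resolvent; let $0<\lambda_1\le\lambda_2\le\cdots\to\infty$ be its eigenvalues and $\{\phi_k\}$ an $L^2(\Omega)$-orthonormal basis of eigenfunctions. For $\alpha\in[0,2]$ set $E^\alpha:=D(A^{\alpha/2})$, a Hilbert space under $\|u\|_\alpha^2:=\sum_k\lambda_k^\alpha\hat u_k^2$ with $\hat u_k:=\langle u,\phi_k\rangle$; by the standard identification of the fractional powers of $A$, one has $E^\alpha=H^{s\alpha}_0(\Omega)$ with equivalent norms, so $E^0=L^2(\Omega)$, $E^1=H^s_0(\Omega)$, and the fractional Sobolev embedding reads $E^\alpha\hookrightarrow L^m(\Omega)$ for $m\le\frac{2n}{n-2s\alpha}$ (all $m<\infty$ if $2s\alpha\ge n$), compactly when the exponent is strict. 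Consequently $E^\theta\hookrightarrow\hookrightarrow L^p(\Omega)$ and $E^{2-\theta}\hookrightarrow\hookrightarrow L^q(\Omega)$ hold simultaneously exactly when
\[
\frac{n}{2s}\Big(1-\frac2p\Big)_+\ <\ \theta\ <\ 2-\frac{n}{2s}\Big(1-\frac2q\Big)_+ ,
\]
and an elementary computation shows that this interval is a non-empty sub-interval of $(0,2)$ precisely under \eqref{condpq.0} (which makes it non-empty) together with \eqref{condpq.1} (which keeps its endpoints inside $(0,2)$ when $n>4s$). I would then fix $\theta=\theta(p,q,n,s)$ to be, say, the midpoint of this interval.

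On $\mathbf E:=E^\theta\times E^{2-\theta}$ I would consider the energy
\[
\Phi(u,v):=\sum_k\lambda_k\,\hat u_k\hat v_k-\int_\Omega H(x,u,v)\,dx ,
\]
the first term being the finite, bounded bilinear form $\langle A^{\theta/2}u,A^{(2-\theta)/2}v\rangle_{L^2}$ (Cauchy--Schwarz gives $|\sum_k\lambda_k\hat u_k\hat v_k|\le\|u\|_\theta\|v\|_{2-\theta}$). Using \eqref{H0}--\eqref{H3} and the above compact embeddings, $\Phi\in C^1(\mathbf E;\R)$ and its critical points are exactly the weak solutions of \eqref{P}. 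Writing $\hat u_k=\lambda_k^{-\theta/2}a_k$, $\hat v_k=\lambda_k^{-(2-\theta)/2}b_k$ identifies $\mathbf E$ isometrically with $\ell^2\times\ell^2$ and the quadratic part of $\Phi$ with $(a,b)\mapsto\langle a,b\rangle_{\ell^2}$; hence $\mathbf E=\mathbf E^+\oplus\mathbf E^-$, where $\mathbf E^{\pm}$ are the images of $\{b=\pm a\}$: these subspaces are orthogonal for the quadratic form, both infinite-dimensional, the form is positive, resp.\ negative, definite on $\mathbf E^+$, resp.\ $\mathbf E^-$, and the self-adjoint operator $L$ representing the quadratic part (a coordinate swap in these variables) is an isomorphism of $\mathbf E$. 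This is the abstract framework of the generalized mountain pass theorem of \cite{F}, whose two hypotheses — a linking geometry and a compactness condition — I would then verify.

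For the geometry, the Ambrosetti--Rabinowitz-type condition \eqref{H1} gives, in the usual way, $H(x,u,v)\ge c_0(|u|^p+|v|^q)-C_0$, with the matching upper bound $H(x,u,v)\le C(|u|^p+|v|^q)+C$ from \eqref{H2}--\eqref{H3} after absorbing the cross terms by Young's inequality; from these, $\Phi$ is bounded above on $\mathbf E^-$ with $\Phi(z)\to-\infty$ as $\|z\|\to\infty$ along $\mathbf E^-\oplus\R e$ for each fixed $e\in\mathbf E^+$ (using that $e$ has non-trivial $v$-component and that \eqref{condpq.0} forces $\max\{p,q\}>2$), while on a suitable bounded set linking $\partial\big((\bar B_R\cap\mathbf E^-)\oplus[0,R]e\big)$ the functional stays bounded below — this last point being where the superquadraticity-on-average encoded in \eqref{condpq.0}--\eqref{condpq.1} enters, and being delicate when $\min\{p,q\}<2$; I would argue along a finite-dimensional filtration of $\mathbf E^+$, on which all norms are equivalent, so that the embedding constants become harmless. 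For the compactness, let $z_m=(u_m,v_m)$ be a Palais--Smale sequence at level $c$; testing $\Phi'(z_m)$ against $\big(\tfrac1p u_m,\tfrac1q v_m\big)$ and invoking \eqref{H1} gives
\[
\Phi(z_m)-\Big\langle\Phi'(z_m),\Big(\tfrac1p u_m,\tfrac1q v_m\Big)\Big\rangle=\Big(1-\tfrac1p-\tfrac1q\Big)\langle a_m,b_m\rangle_{\ell^2}+\int_\Omega\Big(\tfrac1p H_u u_m+\tfrac1q H_v v_m-H\Big)dx ,
\]
whose left side is $\le C+o(1)\|z_m\|$, whose integrand is bounded below, and with $1-\tfrac1p-\tfrac1q>0$ by \eqref{condpq.0}; replacing $\langle a_m,b_m\rangle$ by $\int_\Omega H\,dx+O(1)$ via $\Phi(z_m)\to c$ yields $\int_\Omega H(x,u_m,v_m)\,dx\le C+o(1)\|z_m\|$, hence $\|u_m\|_{L^p}^p+\|v_m\|_{L^q}^q\le C+o(1)\|z_m\|$. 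Testing $\Phi'(z_m)$ against $(\varphi,0)$ and $(0,\psi)$ then bounds $\|v_m\|_{2-\theta}\lesssim\|H_u(\cdot,u_m,v_m)\|_{L^{p'}}+o(1)\lesssim\|u_m\|_{L^p}^{p-1}+\|v_m\|_{L^q}^{(p-1)q/p}+1+o(1)$ and, symmetrically, $\|u_m\|_\theta\lesssim\|H_v(\cdot,u_m,v_m)\|_{L^{q'}}+o(1)$ (with $p'=p/(p-1)$, $q'=q/(q-1)$); inserting the $L^p,L^q$ bound and using $\tfrac{p-1}{p},\tfrac{q-1}{q}<1$ turns this into $\|z_m\|^2\le C+o(1)\|z_m\|^{2-\delta}$ for some $\delta>0$, so $(z_m)$ is bounded. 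Then the compact embeddings give $u_m\to u$ in $L^p(\Omega)$ and $v_m\to v$ in $L^q(\Omega)$ along a subsequence, hence $H_u(\cdot,u_m,v_m)\to H_u(\cdot,u,v)$ in $L^{p'}$ and $H_v(\cdot,u_m,v_m)\to H_v(\cdot,u,v)$ in $L^{q'}$ (continuity and growth \eqref{H3} of the Nemytskii maps); writing $\Phi'(z_m)=Lz_m-N(z_m)=o(1)$ with $N$ compact and $L$ an isomorphism of $\mathbf E$ forces $z_m\to z=(u,v)$ strongly, so $\Phi$ satisfies Palais--Smale.

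The theorem of \cite{F} then produces a critical point $(u,v)\in\mathbf E=H^{s\theta}_0(\Omega)\times H^{s(2-\theta)}_0(\Omega)$ of $\Phi$, i.e.\ the desired weak solution of \eqref{P}, with $\theta$ depending only on $p,q,n,s$. I expect the compactness step — the a priori bound on Palais--Smale sequences — to be the main obstacle: because the quadratic form is indefinite, this bound is not a consequence of \eqref{H1} alone, and one must couple the Ambrosetti--Rabinowitz control of $\int_\Omega H$ with the two scalar identities coming from $\Phi'(z_m)\to0$, exploiting the \emph{strict} subcriticality secured by the choice of $\theta$ to make the resulting inequalities self-improving; verifying the linking geometry, especially in the regime $\min\{p,q\}<2$, is the secondary difficulty.
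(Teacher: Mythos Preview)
Your proposal follows essentially the same route as the paper: the same interpolation scale $E^\alpha=D(A^{\alpha/2})$ with $A=(-\Delta)^s$, the same choice of $\theta$ (the paper's condition \eqref{condpq} is exactly your open interval), the same functional, the same $E^+\oplus E^-$ splitting via the coordinate swap, the same Palais--Smale argument (test against $(\tfrac1p u_m,\tfrac1q v_m)$ to control $\int_\Omega H$, then against $(\varphi,0)$ and $(0,\psi)$ to bound the graph norms, then close by sublinearity of the exponents), and the same appeal to Felmer's abstract theorem \cite{F}.

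The one genuine gap is the linking geometry in the regime $\min\{p,q\}<2$, which you correctly flag but do not resolve. Your suggestion to ``argue along a finite-dimensional filtration of $\mathbf E^+$, on which all norms are equivalent, so that the embedding constants become harmless'' does not address the obstruction: on $E^+$ one has $Q(z)=\tfrac12\|z\|_E^2$, and the lower bound $\Phi(z)\ge \tfrac12\|z\|_E^2-C(\|u\|_\theta^p+\|v\|_{2-\theta}^q)$ is a matter of \emph{homogeneity}, not of embedding constants --- if $p<2$ then $\rho^p\gg\rho^2$ as $\rho\to 0$ regardless of which equivalent norm you use or whether the subspace is finite-dimensional, so $\Phi$ need not be positive on any small round sphere in $E^+$. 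The paper (following \cite{DF-F}) deals with this by means of the anisotropic dilations
\[
B_1(u,v)=(\rho^{\mu-1}u,\rho^{\nu-1}v),\qquad B_2(u,v)=(\sigma^{\mu-1}u,\sigma^{\nu-1}v),
\]
with $\mu,\nu>1$ chosen so that $\tfrac1p<\tfrac{\mu}{\mu+\nu}$ and $\tfrac1q<\tfrac{\nu}{\mu+\nu}$, which is possible precisely because $\tfrac1p+\tfrac1q<1$. These operators are built into the hypotheses of Felmer's theorem (conditions \eqref{I3}--\eqref{I5} in the paper's Theorem~\ref{minimax}) and replace the round sphere by an ellipsoid on which the quadratic part dominates $\int_\Omega H$; the sets $S$ and $Q$ in that theorem are images under $B_1$ and $B_2$, not the naive sphere and cylinder. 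Without this device (or a genuine substitute) your verification of the lower bound \eqref{I4} fails whenever $\min\{p,q\}<2$, which is allowed by \eqref{condpq.0}--\eqref{condpq.1}.
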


For a precise definition of weak solution to \eqref{P} see Section \ref{sols}.

Under more restricted assumptions on $p, q$ (which hold in particular when $p$ and $q$ are both subcritical),  the solution $(u,v)$ given by the previous Theorem is a finite energy solution and enjoy more regularity: 
\begin{cor}\label{Corollary_regularity}
Suppose that $(p,q)$ satisfy \eqref{condpq.0}, \eqref{condpq.1} and also 
$$ \begin{cases} 
\frac{p+q}{p(q-1)} \ge 2\frac{n-2s}{n+2s} \qquad \text{if } p\le q, \\
\frac{p+q}{q(p-1)} \ge 2\frac{n-2s}{n+2s} \qquad \text{if } p\ge q 
\end{cases}
$$
(which holds if $p,q\le 2n/(n-2s)$).  Then the solution given by Theorem \ref{MainThm}  belongs to $(H^{s\theta}_0(\Omega)\cap X)\times (H^{s(2-\theta)}_0(\Omega)\cap X)$ where $X=H^s_0(\Omega)\cap W^{2s,2n/(n+2s)}_{loc}(\Omega)$. 
\end{cor}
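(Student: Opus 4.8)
The plan is to prove Corollary \ref{Corollary_regularity} by a bootstrap argument based on interior Calder\'on--Zygmund regularity for the fractional Laplacian, combined with the Sobolev embeddings of the spaces $H^{s\alpha}_0(\Omega)$ and the growth bounds \eqref{H3}. By the symmetry of \eqref{P} under the exchange $(u,p)\leftrightarrow(v,q)$ it suffices to argue in the case $p\le q$. From Theorem \ref{MainThm} we start with $u\in H^{s\theta}_0(\Omega)\hookrightarrow L^{a_0}(\Omega)$ and $v\in H^{s(2-\theta)}_0(\Omega)\hookrightarrow L^{b_0}(\Omega)$, where $\tfrac1{a_0}=\tfrac12-\tfrac{s\theta}{n}$ and $\tfrac1{b_0}=\tfrac12-\tfrac{s(2-\theta)}{n}$ (read as $0$ when the corresponding power reaches $n/2$). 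The target is to show that both components belong to $X=H^s_0(\Omega)\cap W^{2s,2n/(n+2s)}_{loc}(\Omega)$; this will follow once we prove that $H_v(x,u,v)$ and $H_u(x,u,v)$ lie in $L^{2n/(n+2s)}(\Omega)$. Indeed, the interior Calder\'on--Zygmund estimate for $(-\Delta)^s$ then gives $u,v\in W^{2s,2n/(n+2s)}_{loc}(\Omega)$, while the embedding $L^{2n/(n+2s)}(\Omega)\hookrightarrow H^{-s}(\Omega)$ (dual to $H^s_0(\Omega)\hookrightarrow L^{2n/(n-2s)}(\Omega)$), together with a uniqueness argument for the fractional Dirichlet problem that identifies the weak solution with the $H^s_0(\Omega)$--variational one, yields $u,v\in H^s_0(\Omega)$; combined with Theorem \ref{MainThm} this is exactly the asserted membership in $(H^{s\theta}_0(\Omega)\cap X)\times(H^{s(2-\theta)}_0(\Omega)\cap X)$.

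To reach the displayed integrability I would iterate the following step. If at stage $k$ we have $u\in L^{a_k}(\Omega)$ and $v\in L^{b_k}(\Omega)$, then by \eqref{H3} the right-hand sides of the two equations of \eqref{P} satisfy $H_v(x,u,v)\in L^{c_k}(\Omega)$ and $H_u(x,u,v)\in L^{d_k}(\Omega)$ with
$$
\tfrac1{c_k}=\max\Big\{\tfrac{(q-1)p}{q\,a_k},\ \tfrac{q-1}{b_k}\Big\},\qquad \tfrac1{d_k}=\max\Big\{\tfrac{p-1}{a_k},\ \tfrac{(p-1)q}{p\,b_k}\Big\},
$$
provided $c_k,d_k\ge 1$, which holds in the range of exponents under consideration thanks to \eqref{condpq.0}--\eqref{condpq.1}. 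Feeding this into the fractional Calder\'on--Zygmund/Sobolev estimate — if $f\in L^m(\Omega)$ and $(-\Delta)^s w=f$ in $\Omega$, $w=0$ in $\R^n\setminus\Omega$, then $w\in W^{2s,m}_{loc}(\Omega)$ and $w\in L^{m^{**}}(\Omega)$ with $\tfrac1{m^{**}}=\tfrac1m-\tfrac{2s}{n}$ when $2sm<n$, $w$ in every $L^r$ when $2sm=n$, and $w\in L^\infty$ when $2sm>n$ — produces $u\in L^{a_{k+1}}$, $v\in L^{b_{k+1}}$ with $\tfrac1{a_{k+1}}=\tfrac1{c_k}-\tfrac{2s}{n}$ and $\tfrac1{b_{k+1}}=\tfrac1{d_k}-\tfrac{2s}{n}$.

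Writing $x_k=1/a_k$, $y_k=1/b_k$, this is a coupled pair of affine recursions whose componentwise fixed point is negative, because \eqref{condpq.0} forces the cross-coupling coefficients $\tfrac{(q-1)p}{q}$ and $\tfrac{(p-1)q}{p}$ to exceed $1$ while the increments $-\tfrac{2s}{n}$ are negative. Hence, once a first step of the iteration is decreasing, the recursion is monotone and accelerates, so $x_k,y_k$ strictly decrease and cross below $\tfrac{n+2s}{2n}$ in finitely many steps (after which they may even become $\le 0$, i.e. $u,v\in L^r(\Omega)$ for all $r$). The point at which the additional hypothesis on $(p,q)$ enters is precisely the requirement that the step launched from $(x_0,y_0)$ — with the explicit value of $\theta$ coming from the proof of Theorem \ref{MainThm} — be decreasing, and a computation should show that this is equivalent to $\tfrac{p+q}{p(q-1)}\ge 2\tfrac{n-2s}{n+2s}$ when $p\le q$ (and to its mirror image when $p\ge q$). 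Once $\tfrac1{c_k},\tfrac1{d_k}\le\tfrac{n+2s}{2n}$ we have $H_u(x,u,v),H_v(x,u,v)\in L^{2n/(n+2s)}(\Omega)$ and conclude as above; the parenthetical claim that $p,q\le 2n/(n-2s)$ is sufficient is checked by inserting the endpoint $q=2n/(n-2s)$ into $\tfrac{p+q}{p(q-1)}$ and simplifying.

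The main obstacle is the bookkeeping of the two coupled exponents: the cross terms $|v|^{(p-1)q/p}$ and $|u|^{(q-1)p/q}$ in \eqref{H3} rule out bootstrapping each equation separately, so one must run the two-dimensional iteration above and check (i) that every step is admissible ($c_k,d_k\ge1$ and the hypotheses of the fractional Calder\'on--Zygmund step are met), (ii) that it improves monotonically and terminates, and (iii) that the sharp condition for it to terminate at exponent $2n/(n+2s)$ rather than some larger one is exactly the stated inequality; pinning down this last equivalence, using the explicit $\theta$, is the technical heart of the argument. A secondary, more routine point is the passage from $H^{-s}(\Omega)$ data to the finite-energy conclusion $u,v\in H^s_0(\Omega)$, which uses the zero exterior datum and the smoothness of $\partial\Omega$.
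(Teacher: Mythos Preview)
Your bootstrap scheme is considerably more elaborate than what the paper actually does, and it misreads the role of the extra hypothesis on $(p,q)$.

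The paper's proof is a \emph{one-step} argument with no iteration. The key observation is that $\theta$ is not rigidly fixed by Theorem~\ref{MainThm}: any $\theta\in(0,2)$ satisfying \eqref{condpq} yields a solution. The paper therefore chooses $\theta$ so that, in addition to \eqref{condpq}, the two inequalities
\[
p\,\frac{q-1}{q}\,\frac{2n}{n+2s}\le \frac{2n}{n-2s\theta},
\qquad
(q-1)\,\frac{2n}{n+2s}\le \frac{2n}{n-2s(2-\theta)}
\]
(when $p\le q$) also hold. Adding the corresponding constraints on $\theta$ shows that such a $\theta$ exists precisely when $\dfrac{p+q}{p(q-1)}\ge 2\dfrac{n-2s}{n+2s}$, which is the stated hypothesis. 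With this $\theta$, the Sobolev embeddings $H^{s\theta}_0\hookrightarrow L^{2n/(n-2s\theta)}$ and $H^{s(2-\theta)}_0\hookrightarrow L^{2n/(n-2s(2-\theta))}$ together with \eqref{H3} give $H_u(\cdot,u,v),\,H_v(\cdot,u,v)\in L^{2n/(n+2s)}(\Omega)$ \emph{directly}. Then $L^{2n/(n+2s)}\hookrightarrow H^{-s}$ yields a unique finite-energy solution in $H^s_0(\Omega)$, and \cite{BWZ17} gives the $W^{2s,2n/(n+2s)}_{loc}$ regularity.

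So the hypothesis is not ``the first iterate decreases for the given $\theta$'' as you conjecture; it is the condition that an admissible $\theta$ exists for which the target integrability is hit immediately. Your claimed equivalence is asserted (``a computation should show'') but never carried out, and in fact it depends on which $\theta$ you start from --- for a non-optimal $\theta$ your two-dimensional recursion with $\max$'s need not be monotone from the outset.

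Even if you insisted on bootstrapping from an arbitrary $\theta$ satisfying \eqref{condpq}, two further points would need real work: (i) the Calder\'on--Zygmund result you invoke (as in \cite{BWZ17}) is \emph{local}, so each step only upgrades $L^m_{loc}$-integrability, and you would need a separate argument to control $u,v$ globally on $\Omega$ before feeding them back into \eqref{H3}; and (ii) the identification of the weak/distributional solution with the finite-energy solution must be justified at \emph{every} step of the iteration (not just at the end), since you need uniqueness to transfer the improved integrability to the original $u,v$. You flag this last point as ``routine'', but it is exactly what makes the paper's non-iterative route cleaner.
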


We show in Figure \ref{fig:plot_cor} the region in the $(p,q)$ plane where the corollary applies 
for the case when $n=5$ and $s=1/2$. 

\subsection*{Organization of the paper} The rest of the paper is organized as follows: In Section \ref{S2}, we collect all the preliminaries needed in the course of our arguments and review different notions of solutions to \eqref{P}. 

In Section \ref{S3} we state our main results and develop the functional framework needed to apply the variational methods. We end this section by providing a proof of the regularity result, Corollary \ref{Corollary_regularity}.

In Section \ref{S4}, we prove the main result of the paper, namely Theorem \ref{MainThm}.

We end this paper in Section \ref{S5} where we discuss some possible extensions and generalizations of our results.

\begin{figure}
\centering
\includegraphics[width=0.7\linewidth]{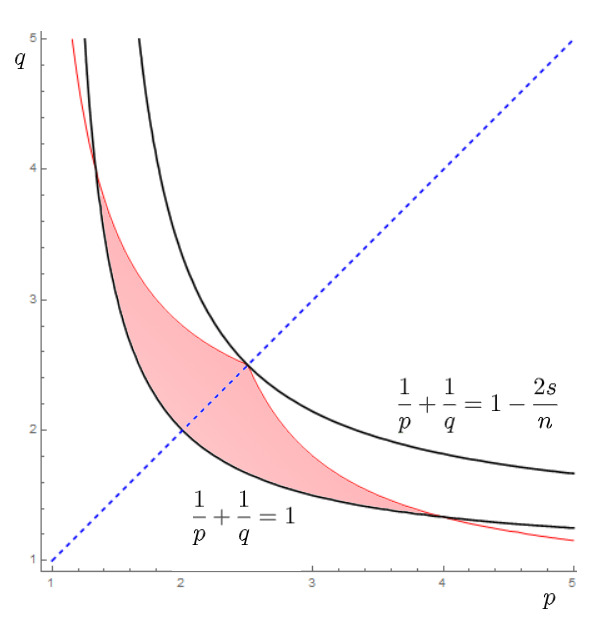}
\caption{Region of admissible $p$ and $q$ (in red) in Corollary \ref{Corollary_regularity} for $n=5$ and $s=1/2$.}
\label{fig:plot_cor}
\end{figure}

\section{Preliminaries}\label{S2}
In this section we collect all the preliminaries needed in the course of the work.

\subsection{Operator and spaces}
Given a smooth bounded open set $\Omega\subseteq \R^n$ we consider the fractional Sobolev spaces 
$$
H^s(\Omega)=\Big\{u\in L^2(\Omega)\colon [u]_s^2:=\iint_{\R^n\times \R^n}\frac{|u(x)-u(y)|^2}{|x-y|^{n+2s}}\,dxdy <\infty\Big\},
$$
endowed with norm $\|u\|_s:=\|u\|_2+[u]_s$, and denote $H^s_0(\Omega)$ the closure of $C_c^\infty(\Omega)$ for $\|.\|_s$. It turns out that $H^s_0(\Omega)=H^s(\Omega)$ when $s<\frac12$ and, in general when $s\in (0,1)$, that 
$$
H^s_0(\Omega)=\{u\in H^s(\R^n)\colon u=0 \text{ a.e. in } \R^n\setminus \Omega\}.
$$
We denote the dual space of $H^s_0(\Omega)$ by $H^{-s}(\Omega)$. We recall the Rellich-Kondrachov type result for fractional Sobolev spaces:
\begin{thm}\label{RK}
Let $2_t^* = 2n/(n-2t)$ if $n>2t$ and $2_t^*=\infty$ if $n\le 2t$.  Let $r\in [1,2_t^*)$, then $H^t_0(\Omega)\subset L^r(\Omega)$ with compact inclusion.
\end{thm}

For future use we will also need the higher order fractional Sobolev spaces $H_0^{1+\alpha}(\Omega)$, $\alpha\in (0,1)$, consisiting of functions $u\in H_0^1(\Omega)$ such that $\nabla u\in H_0^\alpha(\Omega)$, and endowed with the norm $\|u\|_{1+\alpha}=\|u\|_2 + \|\nabla u\|_\alpha$. 

The \emph{Gagliardo} fractional Laplacian $(-\Delta)^s u$, $s\in (0,1)$, of a function $u$ belonging to the Schwarz space $\mathcal{S}(\R^n)$ can be defined  via the Fourier transform $\F$ as 
\begin{align}\label{DefFracLap}
(-\Delta)^s u(x)&= \F^{-1}(|\xi|^{2s}\F(u))(x) =\text{p.v.} \int_{\R^n} \frac{u(x)-u(y)}{|x-y|^{n+2s}}\,dy
\end{align}
up to a normalization constant depending of $n$ and $s$.

In $\Omega$, we consider the fractional Laplacian $(-\Delta)^s$ with Dirichlet boundary condition in the sense that  $(-\Delta)^s$ denotes  the unbounded operator in $L^2(\Omega)$ with domain 
$$
\D((-\Delta)^s) = \{u\in H^s_0(\Omega)\colon |\xi|^{2s}\F(u)\in L^2(\R^n)\} = H^{2s}(\R^n)\cap H^s_0(\Omega).
$$
defined by \eqref{DefFracLap}. Then $(-\Delta)^s$ is a bounded operator between $H_0^s(\Omega)$ and $H^{-s}(\Omega)$, namely  
\begin{equation}\label{NotationWeak}
\langle (-\Delta)^s u,\varphi\rangle = \frac12\iint_{\R^n\times \R^n} \frac{(u(x)-u(y)) (\varphi(x)-\varphi(y))}{|x-y|^{n+2s}}\,dxdy
\end{equation} 
for every $\varphi \in H^s_0(\Omega)$. 

Notice that the r.h.s of \eqref{NotationWeak} makes sense when $u\in H_0^{\theta s}(\Omega)$ and $\varphi\in H_0^{(2-\theta)s}(\Omega)$  for any $\theta\in (0,2)$. Indeed, taking $u,\varphi\in C^\infty_c(\Omega)$ and omitting the factor $1/2$ for simplicity, we can  write it as 
\begin{eqnarray*} 
\iint_{\R^n\times \R^n} \frac{(\tau_hu(x)-u(x)) (\tau_h\varphi(x)-\varphi(x))}{|h|^{n+2s}}\,dxdh = \int_{\R^n} (\tau_hu-u,\tau_h\varphi-\varphi)_{L^2} \,\frac{dh}{|h|^{n+2s}}
\end{eqnarray*}
where $\tau_hu(x):=u(x-h)$. Passing to Fourier variable with $\F u(\xi)=\int_{\R^n} u(x)\exp(-2i\pi\xi x)\,dx$, so that $\F [\tau_h u](\xi) = \exp(-2i\pi\xi h)\F u(\xi)$, we obtain   
\begin{eqnarray*} 
\int_{\R^n} \Big(\int_{\R^n} \frac{|e^{-2i\pi\xi h}-1|^2 }{|h|^{n+2s}}\,dh\Big) \F u(\xi)\overline{\F \varphi(\xi)} \,d\xi.
\end{eqnarray*}
It is easy to see that the inner integral is equal to  $|\xi|^{2s}C(n,s)$ where $C(n,s):=\int_{\R^n} |e^{-2i\pi h_1}-1|^2 |h|^{-n-2s}\,dh$, see \cite{DPV}. Up to the constant $\frac12 C(n,s)$, the r.h.s of \eqref{NotationWeak} can thus be written as 
$$ 
\int_{\R^n} |\xi|^{2s}\F u(\xi)\overline{\F \varphi(\xi)} \,d\xi = \int_{\R^n} |\xi|^{\theta s}\F u(\xi)|\xi|^{(2-\theta) s}\overline{\F \varphi(\xi)} \,d\xi.
$$
If $\theta s<1$ and $(2-\theta) s<1$, i.e. $\theta\in (2-1/s, 1/s)\cap (0,2)$, then this is bounded by 
$$ 
\| |\xi|^{\theta s}\F u\|_2^2 \| |\xi|^{(2-\theta) s} \F \varphi\|_2^2  \lesssim [u]^2_{\theta s} [\varphi]^2_{(2-\theta) s} \le \|u\|_{\theta s} \|\varphi\|_{(2-\theta) s}.
$$ 
Otherwise, if $\theta\in (0,2)$ but, let's say $\theta s\in (1,2)$, so that $(2-\theta) s<1$, then writing $\theta s = 1+\alpha$ with $\alpha\in (0,1)$, we have 
\begin{eqnarray*}
\int_{\R^n} |\xi|^\alpha|\xi|\F u(\xi)|\xi|^{(2-\theta) s}\overline{\F \varphi(\xi)} \,d\xi 
& \lesssim & \||\xi|^\alpha \F[\nabla u]\|_2^2 \||\xi|^{(2-\theta) s}\F \varphi\|_2^2 \\
& \lesssim & [\nabla u]^2_{\alpha}[\varphi]^2_{(2-\theta) s} \\ 
& \le & \|u\|_{\theta s} \|\varphi\|_{(2-\theta) s}.
\end{eqnarray*} 
Thus for any $\theta\in (0,2)$, the r.h.s. of  \eqref{NotationWeak} is bounded (up to a constant depending only on $n$ and $s$) by $\|u\|_{\theta s} \|\varphi\|_{(2-\theta) s}$ and thus makes sense for $(u,\varphi)\in H_0^{\theta s}(\Omega)\times H_0^{(2-\theta) s}(\Omega)$. With a slight abuse of notation, we will keep on writing \eqref{NotationWeak} in this case.

Assuming now that $\varphi\in C^\infty_c(\Omega)$, so that  $\varphi\in H^{(2-\theta)s}(\Omega)$, and still assuming $u\in H^{\theta s}(\Omega)$, $\theta \in (0,2)$, notice also that 
\begin{equation}\label{WeakDistrib}
\langle (-\Delta)^s u,\varphi\rangle = (u,(-\Delta)^s\varphi)_{L^2}.
\end{equation}
Indeed as we just saw, these regularity assumptions ensure that $\frac{(u(x)-u(y)) (\varphi(x)-\varphi(y))}{|x-y|^{n+2s}}$ is integrable in $\R^n\times\R^n$, and so is  $u(x) \frac{\varphi(x)-\varphi(y)}{|x-y|^{n+2s}}$ in particular. Thus 
\begin{align*} 
\int_{\R^n} u (-\Delta)^s\phi \,dx 
&= \int_{\R^n} u(x)  \lim_{\varepsilon\to 0} \int_{\{|x-y|>\varepsilon\}} \frac{\varphi(x)-\varphi(y)}{|x-y|^{n+2s}} \,dydx  \\
&= \lim_{\varepsilon\to 0} \iint_{\{|x-y|>\varepsilon\}}  u(x)   \frac{\varphi(x)-\varphi(y)}{|x-y|^{n+2s}} \,dydx  \\ 
&= \lim_{\varepsilon\to 0} \frac12\iint_{\{|x-y|>\varepsilon\}}  \frac{(u(x)-u(y)) (\varphi(x)-\varphi(y))}{|x-y|^{n+2s}} \,dydx \\
& = \langle (-\Delta)^s u,\varphi\rangle. 
\end{align*}
These remarks are summarized in the following proposition: 

\begin{prop} \label{PropWeakDistrib}
Assume that $u\in H^{\theta s}(\Omega)$  for some $\theta \in (0,2)$ and $\varphi\in C^\infty_c(\Omega)$. Then 
$$ 
\int_{\R^n} u(-\Delta)^s\varphi \,dx = \langle (-\Delta)^s u,\varphi\rangle, 
$$
where $\langle (-\Delta)^s u,\varphi\rangle$ is defined in \eqref{NotationWeak}. 
\end{prop}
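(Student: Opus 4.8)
The plan is to reduce the general case to the smooth case by density, using the a priori estimate established in the discussion preceding the statement. First I would record the key quantitative fact already obtained above: for every $\theta\in(0,2)$ there is a constant $C=C(n,s)$ such that, for $w,\psi\in C_c^\infty(\Omega)$,
\begin{equation}\label{eq:keyest}
\left| \iint_{\R^n\times\R^n} \frac{(w(x)-w(y))(\psi(x)-\psi(y))}{|x-y|^{n+2s}}\,dx\,dy \right| \le C \, \|w\|_{\theta s}\, \|\psi\|_{(2-\theta)s}.
\end{equation}
This is precisely what was shown by passing to Fourier variables, splitting into the cases $\theta s<1$ and $\theta s\in(1,2)$ (the symmetric case $(2-\theta)s\in(1,2)$ being analogous), and invoking \cite{DPV} for the value of the inner integral. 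I would also note that \eqref{WeakDistrib}, namely $\langle(-\Delta)^s w,\psi\rangle = (w,(-\Delta)^s\psi)_{L^2}$, has already been verified for $w\in H^{\theta s}(\Omega)$ and $\psi\in C_c^\infty(\Omega)$ by the Fubini/principal-value computation displayed just above; so strictly speaking the Proposition is already proved for such $w$, and the role of the proof is to package this cleanly and, if one wishes, to re-derive it through density rather than the direct interchange-of-limits argument.

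For the density route: fix $u\in H^{\theta s}(\Omega)$ and $\varphi\in C_c^\infty(\Omega)$. Choose $u_k\in C_c^\infty(\Omega)$ with $u_k\to u$ in $H^{\theta s}(\Omega)$; such a sequence exists by definition of $H_0^{\theta s}(\Omega)$ and the fact that $u\in H^{\theta s}(\Omega)$ vanishes outside $\Omega$, so $u\in H_0^{\theta s}(\Omega)$. On the left side, $u_k\to u$ in $L^2(\R^n)$ and $(-\Delta)^s\varphi\in L^2(\R^n)$ (indeed $(-\Delta)^s\varphi$ is bounded and decays like $|x|^{-n-2s}$), whence $\int_{\R^n} u_k(-\Delta)^s\varphi\,dx \to \int_{\R^n} u(-\Delta)^s\varphi\,dx$. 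On the right side, for each $k$ the identity $\int_{\R^n} u_k(-\Delta)^s\varphi\,dx = \langle(-\Delta)^s u_k,\varphi\rangle$ holds by the Fubini computation above (valid already when both functions are smooth), and by bilinearity $\langle(-\Delta)^s u_k,\varphi\rangle - \langle(-\Delta)^s u,\varphi\rangle = \langle(-\Delta)^s(u_k-u),\varphi\rangle$, which by \eqref{eq:keyest} is bounded by $C\|u_k-u\|_{\theta s}\|\varphi\|_{(2-\theta)s}\to 0$. Passing to the limit in both sides gives the claim.

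The only genuinely delicate point is making sure the bilinear form $\langle(-\Delta)^s u,\varphi\rangle$ in \eqref{NotationWeak} is well-defined (absolutely convergent) for the limiting pair $(u,\varphi)\in H_0^{\theta s}(\Omega)\times H_0^{(2-\theta)s}(\Omega)$, rather than just Cauchy along the approximating sequence; but this too is covered by the estimate \eqref{eq:keyest} extended to $H_0^{\theta s}\times H_0^{(2-\theta)s}$ by density, which is exactly the content of the paragraph preceding the Proposition. No new obstacle arises, so the argument is essentially bookkeeping: the substance is the Fourier-side estimate already carried out, and the Proposition simply states its consequence in the form most convenient for the definition of weak solution in Section \ref{sols}.
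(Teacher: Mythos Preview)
Your proposal is correct, and you accurately identify the paper's own argument: the Proposition is indeed just a restatement of the Fubini/principal-value computation displayed immediately before it. The paper proceeds directly for the given $u\in H^{\theta s}(\Omega)$ and $\varphi\in C^\infty_c(\Omega)$: the Fourier-side estimate ensures that $\frac{(u(x)-u(y))(\varphi(x)-\varphi(y))}{|x-y|^{n+2s}}$ (and hence $u(x)\frac{\varphi(x)-\varphi(y)}{|x-y|^{n+2s}}$) is integrable on $\R^n\times\R^n$, one then writes $\int u(-\Delta)^s\varphi\,dx$ as a limit over $\{|x-y|>\varepsilon\}$, exchanges limit and integral by dominated convergence, and symmetrizes to recover $\langle(-\Delta)^s u,\varphi\rangle$. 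No approximation of $u$ is used.

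Your density route is a legitimate alternative with the same underlying estimate doing the work: prove the identity for $u_k\in C^\infty_c(\Omega)$, then pass to the limit using \eqref{eq:keyest} on the bilinear side and $L^2$-convergence against $(-\Delta)^s\varphi\in L^2(\R^n)$ on the other. This trades the dominated-convergence step for an approximation argument; it is marginally cleaner in that one never has to isolate the integrability of $u(x)\frac{\varphi(x)-\varphi(y)}{|x-y|^{n+2s}}$ separately. One small caveat: the approximation $u_k\to u$ in $H^{\theta s}$ by $C^\infty_c(\Omega)$ functions requires $u\in H_0^{\theta s}(\Omega)$, whereas the statement only assumes $u\in H^{\theta s}(\Omega)$; in the paper's setting these coincide for functions vanishing in $\R^n\setminus\Omega$, but the direct argument avoids even this minor bookkeeping.
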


\subsection{Notions of solutions}\label{sols}
Given a smooth, bounded and open set $\Omega\subset \R^n$, $s\in(0,1)$ and $f\in L^1_{loc}(\Omega)$,  we consider the Dirichlet problem
\begin{align}\label{eq.f}
\begin{cases}
(-\Delta)^s u=f &\text{ in }\Omega\\
u=0 &\text{ in } \R^n\setminus \Omega
\end{cases}
\end{align}
Several notions of solutions exist depending on the regularity of $f$ and $u$. 

First, if $f\in H^{-s}(\Omega)$ then, recalling that $(-\Delta)^s:H^s_0(\Omega)\to H^{-s}(\Omega)$ is a bounded operator, we can look for a solution $u\in H^s_0(\Omega)$:  
\begin{defn}[Definition 11 in \cite{Leonori}] \label{finite_energy}
Let $f\in H^{-s}(\Omega)$. A function $u\in H^s_0(\Omega)$ is a \emph{finite energy solution} to \eqref{eq.f} if for every $\phi\in H^s_0(\Omega)$ it holds that
$$ 
\langle (-\Delta)^s u,\varphi \rangle  =\langle f, \phi\rangle_{H^{-s}(\Omega),H^s_0(\Omega)}.
$$
\end{defn}

Since  $\langle (-\Delta)^s u,\varphi \rangle$ defines a scalar product equivalent to the standard one in $H^s_0(\Omega)$,  Riesz' Theorem ensures the existence of a unique energy solution to \eqref{eq.f} for every $f\in H^{-s}(\Omega)$. Notice that $L^p(\Omega)\hookrightarrow H^{-s}(\Omega)$ for any $p\ge 2n/(n+2s)$.  

When $f$ is only locally integrable we consider weak and distributional solutions.
\begin{defn}
Let $f\in L^1_{loc}(\Omega)$. A function $u$ belonging to $H^{\theta s}_0(\Omega)$ for some $\theta \in (0,2)$ is a \emph{weak solution} to \eqref{eq.f} if
$$ 
\langle (-\Delta)^s u,\varphi \rangle =\int_\Omega f \varphi\,dx \quad \text{ for all } \varphi \in C_c^\infty(\R^n), 
$$
where $\langle (-\Delta)^s u,\varphi \rangle$ is defined in \eqref{NotationWeak}.
\end{defn}

A finite energy solution is a weak solution. However a weak solution $u$ is in general not an energy solution except of course if it belongs to $H^s_0(\Omega)$ i.e. $\theta \ge 1$. 

Notice that $(-\Delta)^s\phi$ is a bounded function if $\phi\in C_c^\infty(\Omega)$. As a consequence $\int_\Omega u(-\Delta)^s\phi\,dx$ exists if $u\in L^1(\Omega)$. This leads to the notion of distributional solution: 
\begin{defn}
Let $f\in L^1_{loc}(\Omega)$. A function $u\in L^1(\R^n)$ is a \emph{distributional solution} to \eqref{eq.f} if $u=0$ a.e. in $\R^n\setminus \Omega$ and for every $\phi\in C_c^\infty(\Omega)$ it holds that
$$
\int_{\R^n} u (-\Delta)^s \phi\,dx=\int_\Omega f\phi\,dx.
$$
\end{defn}

Notice that a weak solution to \eqref{eq.f}  is a distributional solution to \eqref{eq.f} by Prop, \ref{PropWeakDistrib}. The converse is false in general unless $u\in H^{\theta s}_0(\Omega)$:  
\begin{prop}\label{propDistrib2Weak} 
A distributional solution belonging to $H^{\theta s}_0(\Omega)$ for some $\theta \in (0,2)$ is a weak solution. 
\end{prop}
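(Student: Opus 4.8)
The plan is to obtain the weak formulation directly from Proposition \ref{PropWeakDistrib} combined with the defining identity of a distributional solution; the statement is then essentially a compatibility check between the two notions, with the hypothesis $u\in H^{\theta s}_0(\Omega)$ playing the role of guaranteeing that the bilinear form in \eqref{NotationWeak} is even defined.

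First I would record the elementary reductions. As a distributional solution, $u\in L^1(\R^n)$ and $u=0$ a.e.\ in $\R^n\setminus\Omega$; by assumption $u\in H^{\theta s}_0(\Omega)$ for some $\theta\in(0,2)$, so $u\in H^{\theta s}(\Omega)$ (when $\theta s\in(1,2)$, in the sense that $\nabla u\in H^{\theta s-1}_0(\Omega)$), and hence Proposition \ref{PropWeakDistrib} applies to $u$. Moreover $(-\Delta)^s\varphi$ is bounded for $\varphi\in C^\infty_c(\Omega)$, so $\int_{\R^n}u\,(-\Delta)^s\varphi\,dx$ is absolutely convergent. Then, for a test function $\varphi$, I would simply chain
\[
\langle(-\Delta)^s u,\varphi\rangle=\int_{\R^n}u\,(-\Delta)^s\varphi\,dx=\int_\Omega f\varphi\,dx,
\]
the first equality being Proposition \ref{PropWeakDistrib} and the second the definition of a distributional solution; this is exactly the identity defining a weak solution. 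In addition, inspecting the principal-value symmetrization computation that proves Proposition \ref{PropWeakDistrib}, together with the Plancherel bound $\iint_{\R^n\times\R^n}\frac{|(u(x)-u(y))(\varphi(x)-\varphi(y))|}{|x-y|^{n+2s}}\,dxdy\lesssim\|u\|_{\theta s}\|\varphi\|_{(2-\theta)s}$ established just before that proposition, one sees the first equality persists for $\varphi\in C^\infty_c(\R^n)$ as soon as $u$ vanishes outside $\Omega$, so the argument is insensitive to the precise class of admissible test functions.

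The one point to watch is the applicability of Proposition \ref{PropWeakDistrib}: its proof rests on $\frac{(u(x)-u(y))(\varphi(x)-\varphi(y))}{|x-y|^{n+2s}}$ being integrable on $\R^n\times\R^n$, which is precisely what the membership $u\in H^{\theta s}_0(\Omega)$ secures — without this regularity a distributional solution need not even be testable in the weak sense, which is exactly why the extra hypothesis is imposed. Beyond invoking that proposition there is no genuine analytic obstacle; the only remaining housekeeping is to unwind the definition of $H^{\theta s}_0(\Omega)$ in the regime $\theta s>1$ (through $\nabla u$) so as to match its hypotheses. Thus the proof reduces to this bookkeeping.
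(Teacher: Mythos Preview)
Your proposal is correct and follows exactly the approach the paper intends: the paper does not spell out a separate proof of this proposition but states it immediately after observing that weak solutions are distributional by Proposition~\ref{PropWeakDistrib}, so the converse under the extra hypothesis $u\in H^{\theta s}_0(\Omega)$ is meant to follow from that same identity, precisely as you chain $\langle(-\Delta)^s u,\varphi\rangle=\int_{\R^n}u\,(-\Delta)^s\varphi\,dx=\int_\Omega f\varphi\,dx$. Your remark about the test-function class $C^\infty_c(\Omega)$ versus $C^\infty_c(\R^n)$ is a useful piece of bookkeeping that the paper leaves implicit.
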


Given a smooth, bounded and open set  $\Omega\subset \R^n$ and $s\in(0,1)$, consider the system \eqref{P}
\begin{defn} \label{def.weak.sistem}
Let $H\colon \R^2\times \overline \Omega \to \R$ be a $C^1$ function. We say that $(u,v)\in L^1_{loc}(\R^n)\times L^1_{loc}(\R^n)$ is a \emph{weak} (resp. \emph{distributional}) solution to \eqref{P} if  $H_u(., u, v)$ and $H_v(., u, v)$ belong to $L^1_{loc}(\R^n)$ and the functions $u$ and $v$ are weak (resp. distributional) solutions of each corresponding equation. 
\end{defn}

More precisely,  $(u,v)$ is a weak solution  to \eqref{P} if $u\in H^{\theta s}_0(\Omega)$ and $v\in H^{\theta' s}_0(\Omega)$ for some $\theta,\theta' \in (0,2)$, $H_u(.,u,v)$ and  $H_v(.,u,v)$ are in $L^1_{loc}(\Omega)$, and it holds that
$$
\langle (-\Delta)^s u,\varphi \rangle = \int_\Omega H_v(x,u,v) \varphi(x)\,dx, \qquad \langle (-\Delta)^s v,\psi \rangle = \int_\Omega H_u(x,u,v) \psi(x)\,dx
$$
for every $\varphi,\psi \in C_c^\infty(\R^n)$. It follows from Proposition \ref{propDistrib2Weak} that 
\begin{prop}\label{propDistrib2Weak2} 
A distributional solution $(u,v)$  belonging to $H^{\theta s}_0(\Omega)\times H^{\theta' s}_0(\Omega)$ for some $\theta,\theta' \in (0,2)$ is a weak solution. 
\end{prop}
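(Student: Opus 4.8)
The plan is to read the weak formulation off from the distributional one by a single application of Proposition~\ref{PropWeakDistrib}; the only point that needs attention is the enlargement of the class of test functions from $C_c^\infty(\Omega)$ to $C_c^\infty(\R^n)$.

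Concretely, let $u\in H^{\theta s}_0(\Omega)$ be a distributional solution of~\eqref{eq.f}, so that $u\in L^1(\R^n)$, $u=0$ a.e.\ in $\R^n\setminus\Omega$, and $\int_{\R^n}u\,(-\Delta)^s\phi\,dx=\int_\Omega f\phi\,dx$ for every $\phi\in C_c^\infty(\Omega)$. Since $u$ belongs to $H^{\theta s}_0(\Omega)\subset H^{\theta s}(\Omega)$, Proposition~\ref{PropWeakDistrib} applies with this same $\theta$, and for any $\varphi\in C_c^\infty(\Omega)$ it gives
$$\langle(-\Delta)^s u,\varphi\rangle=\int_{\R^n}u\,(-\Delta)^s\varphi\,dx=\int_\Omega f\varphi\,dx,$$
the last equality being exactly the defining property of a distributional solution. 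Hence the weak identity holds for every $\varphi\in C_c^\infty(\Omega)$.

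It remains to promote this to an arbitrary $\varphi\in C_c^\infty(\R^n)$, which I would do by a density and continuity argument. The estimate $|\langle(-\Delta)^s w,\psi\rangle|\lesssim\|w\|_{\theta s}\,\|\psi\|_{(2-\theta)s}$ obtained in Section~\ref{S2} makes $\psi\mapsto\langle(-\Delta)^s u,\psi\rangle$ a bounded linear functional on $H^{(2-\theta)s}_0(\Omega)$; the map $\psi\mapsto\int_\Omega f\psi$ is bounded there as well for the source terms relevant to this paper, thanks to the embedding $L^p(\Omega)\hookrightarrow H^{-s}(\Omega)$ recalled after Definition~\ref{finite_energy}. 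Since $C_c^\infty(\Omega)$ is dense in $H^{(2-\theta)s}_0(\Omega)$, the identity extends by continuity to all of $H^{(2-\theta)s}_0(\Omega)$.

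I expect the main obstacle to be precisely this last passage: Proposition~\ref{PropWeakDistrib}, and with it the computation above, delivers the identity only for test functions supported in $\Omega$, while the definition of weak solution is phrased with test functions in $C_c^\infty(\R^n)$, so one must take care that the density/continuity extension between the two classes is legitimate (alternatively, one may simply observe that only the $C_c^\infty(\Omega)$-version of the identity is ever used in the sequel, for instance in the variational framework of Section~\ref{S3}). Apart from this point, the proof amounts to nothing more than chaining Proposition~\ref{PropWeakDistrib} with the definition of distributional solution.
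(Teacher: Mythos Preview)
Your approach is exactly the paper's: the proposition is stated there as an immediate consequence of Proposition~\ref{propDistrib2Weak} (the single-equation version), which in turn is asserted without further argument as a direct corollary of Proposition~\ref{PropWeakDistrib}. So the core of your proof---feed the distributional identity into Proposition~\ref{PropWeakDistrib}---is precisely what the paper intends.

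You are right to flag the $C_c^\infty(\Omega)$ versus $C_c^\infty(\R^n)$ discrepancy; the paper glosses over it. However, your proposed density fix does not work as written. The closure of $C_c^\infty(\Omega)$ in any $H^t$-norm is $H^t_0(\Omega)$, i.e.\ functions vanishing outside $\Omega$; a generic $\varphi\in C_c^\infty(\R^n)$ does \emph{not} vanish on $\R^n\setminus\Omega$ and therefore cannot be approximated in $H^{(2-\theta)s}$ by elements of $C_c^\infty(\Omega)$. Moreover the left-hand side $\langle(-\Delta)^s u,\varphi\rangle$ genuinely depends on $\varphi|_{\R^n\setminus\Omega}$ (through the cross terms $x\in\Omega$, $y\notin\Omega$ in the double integral), so the two sides are not automatically equal for such $\varphi$. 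Your fallback observation---that only test functions in $C_c^\infty(\Omega)$ are ever needed in the sequel (see the proof of Theorem~\ref{teo2})---is the honest resolution, and is effectively what the paper relies on.
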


\subsection{Interpolation spaces}\label{InterSpaces}
Denote $\{\lambda_{k,s}\}_{k\in\N}$ the eigenvalues of $(-\Delta)^s$ and $\{\phi_{k,s}\}_{k\in\N}$ associated eigenfunctions forming an orthonormal basis of $L^2(\Omega)$. We can then consider the fractional power $A_s^\theta \colon \D(A_s^\theta)\subset L^2(\Omega)\to L^2(\Omega)$, $\theta\in [0,2]$, defined as 
\begin{equation}\label{DefA}
A_s^\theta u = \sum_{k=1}^\infty \lambda_{k,s}^{\theta/2} u_k \phi_{k,s}, \qquad u_k =(u \varphi_{k,s})_{L^2}
\end{equation}
with domain 
\begin{equation}\label{DefE}
E_s^\theta := \D(A_s^\theta) =:\{u\in L^2(\Omega)\colon \sum_{k=1}^\infty \lambda_{k,s}^\theta |u_k|^2 <\infty\}.
 \end{equation}
Then $E_s^\theta$ is a Hilbert space with inner product and associated norm given by
$$ 
(u,v)_{E^\theta_s}=(A_s^\theta u, A_s^\theta v)_{L^2} = \sum_k \lambda_{k,s}^\theta u_kv_k, \qquad \|u\|_{E_s^\theta} = \int_\Omega |A_s^\theta u|^s\,dx =  \sum_{k=1}^\infty \lambda_{k,s}^\theta |u_k|^2.  
$$
Moreover  they are interpolation spaces in the sense of Lions-Magenes,
$$ 
E_s^\theta = [H^s_0(\Omega), L^2(\Omega)]_{1-\theta} 
$$
and $E_s^\theta \subset H_0^{s\theta}(\Omega)$ (with equality if $s\neq \frac12$ and $s\theta\neq \frac12, \frac32$). Notice in particular that $E_s^\theta \subset L^r(\Omega)$ if $r\le 2^*_{s\theta}$ with compact inclusion if $r<2^*_{s\theta}$.

Notice eventually that $A_s^\theta u :  E_s^\theta\to L^2(\Omega$ is an isomorphism. Denoting $A_s^{-\theta}$ its inverse, we have 
$$ 
(A^{-\theta}_s w,y)_{E_s^\theta} = (w. A^\theta y)_{L^2} \qquad w\in L^2,\,y\in E_s^\theta. 
$$

\section{Variational framework}\label{S3}
In this section we introduce the variational framework needed in order to apply the variational techniques.

First let us fix some $s\in (0,1)$ and real numbers $p,q>1$ satisfying \eqref{condpq.0}, \eqref{condpq.1}. Under these assumptions we can then find $\theta\in (0,2)$ such that
\begin{align} \label{condpq} 
p<2_{s\theta}^* := \frac{2n}{n-2s\theta}, \qquad q<2_{s(2-\theta)}^* := \frac{2n}{n-2s(2-\theta)}.
\end{align}
We then have the following continuous embeddings
\begin{equation}\label{ContEmbed}
\begin{split}
& E_s^\theta \hookrightarrow  H^{s\theta}_0(\Omega) \hookrightarrow L^{2^*_{s\theta}}(\Omega) \hookrightarrow L^p(\Omega), \\
& E_s^{(2-\theta)} \hookrightarrow  H^{s(2-\theta)}_0(\Omega) \hookrightarrow L^{2^*_{s(2-\theta)}}(\Omega) 
\hookrightarrow L^q(\Omega), 
\end{split}
\end{equation} 
the last embedding of each chain being compact. 

We will find a solution of \eqref{P} by looking for a critical point of the energy functional $J_\theta\colon E_s^\theta\times E_s^{2-\theta}\to \R$ defined as
\begin{equation}\label{J}
J_\theta (u,v) = \int_\Omega A_s^\theta u A_s^{2-\theta} v\, dx - \int_\Omega H(x,u,v)\, dx,
\end{equation}
where $A_s^\theta$ and $ E_s^\theta$ are defined in \eqref{DefA} and \eqref{DefE} respectively (we will verify in the next section that the growth condition \eqref{H3} and \eqref{ContEmbed} guarantee that $J_\theta$ is a well-defined $C^1$ functional on $E_s^\theta\times E_s^{2-\theta}$). 

Once this value of $\theta$ is defined, we have the notion of solution associated to it. Later on we will relate this concept of solution to the former ones previously introduced.
\begin{defn}\label{theta.sol}
A critical point  $(u,v)\in E_s^\theta\times E_s^{2-\theta}$ of $J_\theta$ will be called a \emph{$\theta-$weak solution} to \eqref{P}.  It satisfies 
\begin{equation} \label{J'}
\int_\Omega \left( A_s^\theta u A_s^{2-\theta} \psi + A_s^\theta\varphi A_s^{2-\theta} v \right)\,dx - \int_\Omega \left( H_u(x,u,v)\varphi + H_v(x,u,v)\psi\right)\,dx =0
\end{equation}
for any test functions $(\varphi,\psi)\in E_s^\theta\times E^{2-\theta}_s$.
\end{defn}

We will prove in the next section that 
\begin{thm} \label{teo1}
There exists a $\theta-$weak solution to \eqref{P}.
\end{thm}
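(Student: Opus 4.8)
The plan is to obtain a critical point of the strongly indefinite functional $J_\theta$ on the Hilbert space $E:=E_s^\theta\times E_s^{2-\theta}$ by applying the generalized Mountain Pass Theorem of \cite{F}. First I would record the natural splitting of $E$ into the two ``diagonal'' subspaces that diagonalize the quadratic form $\mathcal{A}(u,v)=\int_\Omega A_s^\theta u\, A_s^{2-\theta} v\,dx$. Concretely, using the eigenbasis $\{\phi_{k,s}\}$ one sets, for $w=\sum_k w_k\phi_{k,s}\in L^2(\Omega)$, the operator $T w = \sum_k \lambda_{k,s}^{(1-\theta)/2} w_k\phi_{k,s}$, so that the map $w\mapsto (T^{-1}w, Tw)$ (suitably normalized) identifies $E$ with pairs, and $\mathcal{A}$ becomes $\|z^+\|^2-\|z^-\|^2$ on the $\pm$ components; equivalently $E=E^+\oplus E^-$ with $E^\pm=\{(u,\pm Su): u\in E_s^\theta\}$ for an appropriate isomorphism $S:E_s^\theta\to E_s^{2-\theta}$, both infinite dimensional, on which $\mathcal{A}$ is positive (resp.\ negative) definite. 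The quadratic part is thus in the required form; I would also fix a Hilbert basis of $E^-$ to serve as the sequence of finite-dimensional approximating subspaces demanded by the abstract theorem.

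Next I would verify the geometric hypotheses. The compactness of the last embeddings in \eqref{ContEmbed}, together with \eqref{H3}, shows that $\Psi(u,v):=\int_\Omega H(x,u,v)\,dx$ is of class $C^1$ with compact (hence weakly continuous) gradient, so $J_\theta=\mathcal{A}-\Psi$ has the structure $\tfrac12\langle Lz,z\rangle - \Psi(z)$ with $L$ self-adjoint, invertible, and $\nabla\Psi$ compact — exactly the setting of the linking theorem in \cite{F}. For the local geometry near the origin, assumption \eqref{H2} gives $|H(x,u,v)|\le c_1(|u|^p+|v|^q)$ for small $(u,v)$, while \eqref{H3} controls it for large values; combined with the embeddings $E_s^\theta\hookrightarrow L^p$, $E_s^{2-\theta}\hookrightarrow L^q$ and $p,q>2$... — more precisely one splits $\Omega$ into the region $\{|(u,v)|\le r\}$ and its complement and uses \eqref{H2}, \eqref{H3} — one obtains $\Psi(z)=o(\|z\|^2)$ near $0$ plus higher-order terms, hence $J_\theta(z)\ge \rho_0>0$ on a small sphere $S_R^+\subset E^+$. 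For the linking profile one works on $E^-\oplus \R e$ for a fixed $e\in E^+$: the superquadraticity encoded in \eqref{H1}, namely $\tfrac1p H_u u+\tfrac1q H_v v\ge H>0$ for $|(u,v)|\ge r$, yields by the usual ODE comparison a lower bound $H(x,u,v)\ge c_2(|u|^p + |v|^q) - c_3$, which forces $J_\theta\to-\infty$ on this subspace, so it is bounded above by $0$ on a large enough boundary $\partial Q$. Thus the sets $S_R^+$ and $\partial Q$ link.

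I would then establish the Palais–Smale (or Cerami) condition at every level. Let $z_m=(u_m,v_m)$ satisfy $J_\theta(z_m)\to c$, $J_\theta'(z_m)\to 0$. Testing $J_\theta'(z_m)$ against $(u_m/p, v_m/q)$ and subtracting a multiple of $J_\theta(z_m)$, and using \eqref{H1} to absorb the Hamiltonian terms exactly as in \cite{DF-F}, gives $\int_{\{|(u_m,v_m)|\ge r\}} H(x,u_m,v_m)\,dx\le C$, and then \eqref{H3} bounds $\|H_u(\cdot,u_m,v_m)\|$ and $\|H_v(\cdot,u_m,v_m)\|$ in the relevant dual Lebesgue spaces; feeding this back through the two equations in \eqref{J'} and the invertibility of the quadratic form (i.e.\ $u_m = A_s^{-\theta}$ applied to a bounded quantity in $L^{(2^*_{s\theta})'}$, similarly for $v_m$) yields boundedness of $(u_m,v_m)$ in $E$. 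Then weak convergence along a subsequence, compactness of $\nabla\Psi$, and the invertibility of $L$ upgrade this to strong convergence, giving (PS)$_c$. With the splitting, the linking geometry, and (PS) in hand, the abstract theorem of \cite{F} produces a critical point $z=(u,v)\in E_s^\theta\times E_s^{2-\theta}$ at level $c\ge\rho_0>0$, which is by Definition \ref{theta.sol} a $\theta$-weak solution, proving Theorem \ref{teo1}.

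The main obstacle I expect is the Palais–Smale boundedness step: because the functional is strongly indefinite, one cannot simply test with $z_m$ itself, and the asymmetric growth in \eqref{H3} (with the mixed exponents $(p-1)q/p$ and $(q-1)p/q$) means the a priori bounds on $u_m$ and $v_m$ come in different Sobolev scales and must be bootstrapped against each other through both equations; getting the exponents to close up is precisely where conditions \eqref{condpq.0} and \eqref{condpq.1} (and the freedom in choosing $\theta$ to satisfy \eqref{condpq}) are used, and it requires care. A secondary technical point is checking that $J_\theta\in C^1(E)$ with the stated compactness of $\nabla\Psi$, which again rests on \eqref{H3} together with the compact embeddings into $L^p$ and $L^q$.
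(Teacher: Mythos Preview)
Your overall strategy coincides with the paper's: apply Felmer's abstract linking theorem (Theorem~\ref{minimax}) to $J_\theta$ on $E=E^+\oplus E^-$, check that $J_\theta\in C^1$ with $\mathcal{H}'$ compact, verify the linking geometry, and prove (PS) by testing $J_\theta'(z_m)$ against $(u_m/p,v_m/q)$ and bootstrapping through the two equations. The (PS) argument you sketch is essentially the paper's.

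There is, however, a real gap in your geometry step. You claim $\Psi(z)=o(\|z\|^2)$ near $0$, giving $J_\theta\ge\rho_0$ on a small round sphere in $E^+$. But hypotheses \eqref{condpq.0}--\eqref{condpq.1} do \emph{not} force $p,q>2$: the constraint $\tfrac1p+\tfrac1q<1$ allows one exponent below $2$ (e.g.\ $p=\tfrac32$, $q=10$). On $E^+$ one has $v=Su$ with $S$ an isometry, so $\int_\Omega |v|^q\le C\|z\|_E^q$; if $q<2$ this term is \emph{not} $o(\|z\|^2)$ and the isotropic small-sphere estimate fails. The paper (following \cite{DF-F}) resolves this by replacing the sphere with an anisotropic ellipsoid: one picks $\mu,\nu>1$ with $\tfrac1p<\tfrac{\mu}{\mu+\nu}$ and $\tfrac1q<\tfrac{\nu}{\mu+\nu}$, sets $B_1(u,v)=(\rho^{\mu-1}u,\rho^{\nu-1}v)$, $B_2(u,v)=(\sigma^{\mu-1}u,\sigma^{\nu-1}v)$, and takes $S=\{B_1z:\|z\|=\rho,\,z\in E^+\}$ and the corresponding $Q$ as in Theorem~\ref{minimax}. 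These operators $B_1,B_2$ are not cosmetic---they are exactly the device that makes \eqref{I4}--\eqref{I5} hold for all admissible $(p,q)$, and they are built into the hypotheses (including \eqref{I3}) of the abstract theorem you are invoking. Relatedly, your reference to ``finite-dimensional approximating subspaces'' of $E^-$ points toward a Benci--Rabinowitz/Galerkin scheme rather than Felmer's theorem; the latter uses no such approximation but instead the $B_1,B_2$ machinery and the invertibility condition \eqref{I3}, so be sure you are setting up the correct abstract framework.
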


Theorem \ref{MainThm} then follows by noticing that a $\theta-$weak solution is in fact a weak solution: 
\begin{thm} \label{teo2}
If $(u,v)\in E_s^\theta\times E_s^{2-\theta}$ is a $\theta-$weak solution to \eqref{P}, then $(u,v)$ is a weak solution to \eqref{P}.
\end{thm}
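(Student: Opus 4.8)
The plan is to deduce the result from Proposition \ref{propDistrib2Weak2}: I will first check that a $\theta$-weak solution $(u,v)$ is a distributional solution of \eqref{P}, and then apply that proposition, using that $u\in E_s^\theta\subset H^{s\theta}_0(\Omega)$ and $v\in E_s^{2-\theta}\subset H^{s(2-\theta)}_0(\Omega)$, to conclude it is a weak solution. Routing through distributional solutions is convenient because it only requires testing against $\phi\in C_c^\infty(\Omega)$, which lies in $E_s^\theta\cap E_s^{2-\theta}$ and can therefore be inserted directly into the critical-point identity \eqref{J'}; the passage to test functions in $C_c^\infty(\R^n)$ is then entirely absorbed into Proposition \ref{propDistrib2Weak2}.

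First I record the integrability prerequisites. As $\Omega$ is bounded, $u,v\in L^2(\Omega)$ and $u=v=0$ a.e.\ in $\R^n\setminus\Omega$, we have $u,v\in L^1(\R^n)$. By \eqref{ContEmbed}, $u\in L^p(\Omega)$ and $v\in L^q(\Omega)$, hence by \eqref{H3} $|H_u(\cdot,u,v)|\lesssim|u|^{p-1}+|v|^{(p-1)q/p}+1$ with $|u|^{p-1},|v|^{(p-1)q/p}\in L^{p/(p-1)}(\Omega)$, and symmetrically $H_v(\cdot,u,v)$ is dominated by functions in $L^{q/(q-1)}(\Omega)$; since $\Omega$ is bounded, $H_u(\cdot,u,v),H_v(\cdot,u,v)\in L^1(\Omega)\subset L^1_{loc}(\Omega)$.

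The key point is the following identity: for every $w\in L^2(\Omega)$ with $w=0$ outside $\Omega$, every $\phi\in C_c^\infty(\Omega)$, and every $\alpha\in[0,2]$ with $w\in E_s^\alpha$,
\[
\int_\Omega A_s^\alpha w\, A_s^{2-\alpha}\phi\,dx=\sum_{k\in\N}\lambda_{k,s}\,w_k\phi_k=\int_{\R^n} w\,(-\Delta)^s\phi\,dx,
\]
where $w_k=(w,\phi_{k,s})_{L^2}$ and $\phi_k=(\phi,\phi_{k,s})_{L^2}$. Indeed, $\phi\in C_c^\infty(\Omega)\subset H^{2s}(\R^n)\cap H^s_0(\Omega)=\D((-\Delta)^s)=E_s^2\subset E_s^\alpha\cap E_s^{2-\alpha}$, so by the spectral theorem the restriction to $\Omega$ of the function $(-\Delta)^s\phi$ defined in \eqref{DefFracLap} equals $A_s^2\phi=\sum_k\lambda_{k,s}\phi_k\phi_{k,s}$. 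Since $(-\Delta)^s\phi\in L^\infty(\R^n)$ and $w$ vanishes outside $\Omega$, Parseval in $L^2(\Omega)$ gives $\int_{\R^n}w(-\Delta)^s\phi\,dx=\int_\Omega w\,A_s^2\phi\,dx=\sum_k\lambda_{k,s}w_k\phi_k$; the leftmost equality is the same computation, using orthonormality of $\{\phi_{k,s}\}$ and the absolute convergence guaranteed by $\sum_k\lambda_{k,s}^\alpha|w_k|^2=\|w\|_{E_s^\alpha}^2<\infty$ together with $\sum_k\lambda_{k,s}^{2-\alpha}|\phi_k|^2=\|\phi\|_{E_s^{2-\alpha}}^2<\infty$.

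To finish, I apply this identity with $(\alpha,w)=(\theta,u)$ and with $(\alpha,w)=(2-\theta,v)$, and plug $\psi=0$ (resp.\ $\varphi=0$) together with an arbitrary test function from $C_c^\infty(\Omega)$ into \eqref{J'}, obtaining
\[
\int_{\R^n} u\,(-\Delta)^s\psi\,dx=\int_\Omega H_v(x,u,v)\psi\,dx,\qquad \int_{\R^n} v\,(-\Delta)^s\varphi\,dx=\int_\Omega H_u(x,u,v)\varphi\,dx
\]
for all $\varphi,\psi\in C_c^\infty(\Omega)$. Combined with $u,v\in L^1(\R^n)$, $u=v=0$ a.e.\ outside $\Omega$ and $H_u(\cdot,u,v),H_v(\cdot,u,v)\in L^1_{loc}(\Omega)$, this is precisely the statement that $(u,v)$ is a distributional solution of \eqref{P}, and Proposition \ref{propDistrib2Weak2} upgrades it to a weak solution. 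I expect the only delicate point to be the identity of the previous paragraph — matching the spectrally defined operator $A_s^2$ with the integral fractional Laplacian \eqref{DefFracLap} on $C_c^\infty(\Omega)$, and justifying the interchange of the infinite sums with the $L^2$ pairings; everything else is routine bookkeeping plus the cited proposition.
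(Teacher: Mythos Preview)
Your proof is correct and follows essentially the same route as the paper: reduce to showing $(u,v)$ is a distributional solution via Proposition~\ref{propDistrib2Weak2}, verify $H_u,H_v\in L^1(\Omega)$ from \eqref{H3} and the embeddings \eqref{ContEmbed}, and then plug $\varphi=0$ or $\psi=0$ with a $C_c^\infty(\Omega)$ test function into \eqref{J'}, using the eigenfunction expansion to identify $\int_\Omega A_s^\theta\varphi\,A_s^{2-\theta}v\,dx=\sum_k\lambda_{k,s}\varphi_k v_k=\int_\Omega v\,(-\Delta)^s\varphi\,dx$. Your write-up is in fact slightly more explicit than the paper's about why $\phi\in C_c^\infty(\Omega)$ lies in $E_s^2$ and why the spectral and integral definitions of $(-\Delta)^s\phi$ agree on $\Omega$; the paper takes this for granted.
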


\begin{proof}
Let $(u,v)$ be a $\theta-$weak solution to \eqref{P} i.e. $(u,v)\in E_s^\theta \times E_s^{2-\theta}$ satisfies \eqref{J'}.  First, by hypothesis \eqref{H3} we have that $H_u(\cdot, u, v), H_v(\cdot, u, v)\in L^1(\Omega)$. Indeed, Young's inequality and \eqref{H3} gives $|H_u(x,u,v)|, |H_v(x,u,v)|\le C(|u|^p+|v|^q+1)$ and, with \eqref{ContEmbed}, 
$$ 
\int_\Omega |u|^p+|v|^q+1\,dx \le  C(\|u\|_{E_s^\theta}^p+\|v\|_{E_s^{2-\theta}}^q+1).
$$

To conclude, in view of Prop. \ref{propDistrib2Weak2}, it suffices to show  that $(u,v)$ is a distributional solution. Taking $\psi=0$ in \eqref{J'} gives that
\begin{equation}\label{Eq100}
\int_\Omega A_s^\theta\varphi A_s^{2-\theta} v\,dx = \int_\Omega  H_u(x,u,v)\varphi \,dx \qquad\text{for any $\varphi\in E_s^\theta$. }
\end{equation}
Since $\{\phi_{k,s}\}$ is an orthonormal basis of $L^2(\Omega)$ we can rewrrite the l.h.s. of \eqref{Eq100} as 
$$ 
\left(\sum_{k\ge 1} \lambda_{k,s}^\frac{\theta}{2} \varphi_k \phi_{k,s},  \sum_{j\ge 1} \lambda_{j,s}^\frac{2-\theta}{2} v_j \phi_{j,s} \right)_{L^2} = \sum_{k\ge 1} \lambda_{k,s} \varphi_k v_k. 
$$
where we denoted $ \varphi_k=( \varphi,\phi_{k,s})_{L^2}$ and $v_j=( v_j,\phi_{j,s})_{L^2}$. Taking $\varphi\in C^\infty_c(\Omega)$, so that $(-\Delta)^s \varphi = \sum_{k\ge 1} \lambda_{k,s} \varphi_k\in L^2(\Omega)$, we can rewrite \eqref{Eq100} as  
$$ 
\int_\Omega v(-\Delta)^s \varphi\,dx= \int_\Omega  H_u(x,u,v)\varphi \,dx .
$$
This shows that $v$ is a distributional solution. We can verify in the same way that $u$ is a distributional solution  taking $\varphi=0$ in \eqref{J'}.
\end{proof}

We finish this section with the proof of the regularity result for weak solutions to \eqref{P}, that is Corollary \ref{Corollary_regularity}. 

\begin{proof}[Proof of Corollary \ref{Corollary_regularity}]
We first recall that if $f\in L^p(\Omega)$ for some $p\ge 2n/(n+2s)$, then $f\in H^{-s}(\Omega)$ so that the equation $(-\Delta)^s u=f$ in $\Omega$ has a unique finite energy solution $u\in H^s_0(\Omega)$ (see Definition \ref{finite_energy}). According to \cite[Theorem 1.4]{BWZ17}, $u$ belongs in fact to $u\in W^{2s,p}_{loc}(\Omega)$. 

The proof of Corollary \ref{Corollary_regularity} will thus follow once we can prove that $H_u(\cdot, u, v)$ and $H_v(\cdot, u, v)$ belong to $L^{2n/(n+2s)}(\Omega)$, where $(u,v)$ is the solution given by Theorem \ref{MainThm}. Let us assume w.l.o.g. that $p\le q$. In view of the growth assumption \eqref{H3}, we thus need $u\in L^{p\frac{q-1}{q}\frac{2n}{n+2s}}(\Omega$ and $v\in L^{(q-1)\frac{2n}{n+2s}}(\Omega)$. Since $u\in H^{s\theta}\Omega)\hookrightarrow L^\frac{2n}{n-2s\theta}(\Omega)$ and $v\in H^{s\theta}\Omega)\hookrightarrow L^\frac{2n}{n-2s(2-\theta)}(\Omega)$, we  need 
\begin{equation}\label{ineq}
 p\frac{q-1}{q}\frac{2n}{n+2s} \le \frac{2n}{n-2s\theta},\qquad \text{and} \qquad (q-1)\frac{2n}{n+2s} \le \frac{2n}{n-2s(2-\theta)}. 
\end{equation}

Notice that if $q\le \frac{2n}{n-2s}$ then $\frac{q-1}{q}\frac{2n}{n+2s}\le 1$ so that \eqref{ineq} is a consequences of \eqref{condpq}. 

When  $q>\frac{2n}{n-2s}$ then \eqref{ineq} implies \eqref{condpq}. Notice that \eqref{ineq} holds for some $\theta\in (0,2)$ if we assume that 
$$ 
\frac{1}{q-1} + \frac{q}{p(q-1)} \ge 2\frac{n-2s}{n+2s}. 
$$
Notice eventually that, since  $q\ge p$, this inequality holds in particular if $q<2n/(n-2s)$. 
\end{proof}

\section{Proof of Theorem \ref{teo1}: Existence of $\theta$-weak solution.}\label{S4}
In order to finish the proof of our main result, Theorem \ref{MainThm}, it remains to prove the existence of a $\theta-$weak solution i.e. the  energy function
\begin{equation}\label{J2}
J_\theta (u,v) = \int_\Omega A_s^\theta u A_s^{2-\theta} v\, dx - \int_\Omega H(x,u,v)\, dx \qquad (u,v)\in  E_s^\theta\times E_s^{2-\theta}
\end{equation} 
possesses a critical point. Recall that $\theta\in (0,2)$ satisfies \eqref{condpq}. We start by proving standard properties of  $J_\theta$.

\begin{prop}\label{JC1}
The functional $J_\theta$ defined in \eqref{J} is of class $C^1$ and its derivative is given by 
$$
J'_\theta(u,v)(\varphi,\psi)=\int_\Omega \left( A_s^\theta u A_s^{2-\theta} \psi + A_s^\theta\varphi A_s^{2-\theta} v - H_u(x,u,v)\varphi - H_v(x,u,v)\psi\right)\,dx.
$$
for every $(\varphi,\psi)\in E^\theta_s\times E_s^{2-\theta}$.
\end{prop}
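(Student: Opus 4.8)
The plan is to split $J_\theta = B - \Phi$, where $B(u,v) = \int_\Omega A_s^\theta u\, A_s^{2-\theta} v\,dx$ is the indefinite bilinear part and $\Phi(u,v) = \int_\Omega H(x,u,v)\,dx$ the nonlinear part, and to treat each separately. For $B$, recall from \eqref{DefA} and \eqref{DefE} that $A_s^\theta\colon E_s^\theta\to L^2(\Omega)$ and $A_s^{2-\theta}\colon E_s^{2-\theta}\to L^2(\Omega)$ are isometric isomorphisms, so Cauchy--Schwarz gives $|B(u,v)|\le \|A_s^\theta u\|_{L^2}\|A_s^{2-\theta}v\|_{L^2} = \|u\|_{E_s^\theta}\|v\|_{E_s^{2-\theta}}$; a bounded bilinear form is smooth, and differentiating produces $B'(u,v)(\varphi,\psi) = \int_\Omega\big(A_s^\theta\varphi\, A_s^{2-\theta}v + A_s^\theta u\, A_s^{2-\theta}\psi\big)\,dx$. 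Nothing subtle happens here.

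For $\Phi$, the first task is to see it is finite on $E_s^\theta\times E_s^{2-\theta}$. Writing $H(x,u,v) = H(x,0,0) + \int_0^1\big(H_u(x,tu,tv)u + H_v(x,tu,tv)v\big)\,dt$ and feeding \eqref{H3} into Young's inequality---the exponents $(p-1)q/p$ and $(q-1)p/q$ being calibrated precisely so that $|v|^{(p-1)q/p}|u|$ and $|u|^{(q-1)p/q}|v|$ are controlled by $|u|^p + |v|^q$---one obtains $|H(x,u,v)|\le C(|u|^p+|v|^q+1)$. Since $H(\cdot,0,0)$ is bounded on $\Omega$ (it is $\le 0$ by \eqref{H2} and $H$ is continuous) and \eqref{ContEmbed} provides $E_s^\theta\hookrightarrow L^p(\Omega)$ and $E_s^{2-\theta}\hookrightarrow L^q(\Omega)$ for the chosen $\theta$ satisfying \eqref{condpq}, the integral converges; the same bound applied to $H_u,H_v$ gives $H_u(\cdot,u,v),H_v(\cdot,u,v)\in L^1(\Omega)$, so the candidate derivative is well defined.

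Next I would prove Gâteaux differentiability of $\Phi$ with the announced derivative. Fixing $(u,v)$ and a direction $(\varphi,\psi)$, the difference quotient $t^{-1}\big(\Phi(u+t\varphi,v+t\psi)-\Phi(u,v)\big)$ equals $\int_\Omega\int_0^1\big(H_u(x,u+\tau t\varphi,v+\tau t\psi)\varphi + H_v(x,u+\tau t\varphi,v+\tau t\psi)\psi\big)\,d\tau\,dx$; using \eqref{H3}, the elementary bound $|a+b|^{p-1}\le 2^{p-1}(|a|^{p-1}+|b|^{p-1})$ and Young's inequality, the inner integrand is dominated, uniformly for $|t|\le 1$, by a fixed function in $L^1(\Omega)$, and it converges a.e.\ as $t\to 0$ to $H_u(x,u,v)\varphi + H_v(x,u,v)\psi$; dominated convergence then yields $D\Phi(u,v)(\varphi,\psi) = \int_\Omega\big(H_u(x,u,v)\varphi + H_v(x,u,v)\psi\big)\,dx$. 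To upgrade Gâteaux to Fréchet it suffices to show $D\Phi$ is continuous: by \eqref{H3} and $H\in C^1$ the superposition operators $(u,v)\mapsto H_u(\cdot,u,v)$ and $(u,v)\mapsto H_v(\cdot,u,v)$ are continuous from $L^p(\Omega)\times L^q(\Omega)$ into $L^{p'}(\Omega)$ and $L^{q'}(\Omega)$ respectively, and composing with the continuous embeddings of \eqref{ContEmbed} and the dual inclusions $L^{p'}(\Omega)\hookrightarrow (E_s^\theta)^*$, $L^{q'}(\Omega)\hookrightarrow (E_s^{2-\theta})^*$ shows $(u,v)\mapsto D\Phi(u,v)$ is continuous into the dual. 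A functional whose Gâteaux derivative exists everywhere and is continuous is of class $C^1$, so $\Phi\in C^1$ and hence $J_\theta = B-\Phi\in C^1$ with $J_\theta' = B'-D\Phi$, which is the stated formula.

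The step carrying the real weight is the continuity of the superposition operators associated with the mixed-growth conditions in \eqref{H3}: those exponents are tuned so that $|v|^{(p-1)q/p}\in L^{p'}(\Omega)$ when $v\in L^q(\Omega)$ (and symmetrically), which is exactly what makes the composition with \eqref{ContEmbed} land in the correct dual spaces. Everything else---boundedness of the bilinear form, finiteness of $\Phi$, Gâteaux differentiability via dominated convergence---is routine.
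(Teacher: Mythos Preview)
Your proof is correct and follows essentially the same approach as the paper: compute the G\^ateaux derivative directly, bound the $H_u$ and $H_v$ integrals via \eqref{H3}, H\"older, and the embeddings \eqref{ContEmbed}, then invoke standard arguments for continuity of the derivative. The paper computes the derivative of the bilinear part through the eigenfunction expansion rather than by citing smoothness of bounded bilinear forms, and it leaves the passage from G\^ateaux to Fr\'echet as ``usual arguments,'' whereas you spell out the Nemytskii-operator continuity that underlies it---but the substance is the same.
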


\begin{proof}
Let $(u,v), (\varphi,\psi)\in E^\theta_s \times E^{2-\theta}_s$ and define the corresponding Fourier coefficients
$$
u_k=(u,\phi_{k,s}), \quad v_k=(v,\phi_{k,s}), \quad \psi_k=(\psi,\phi_{k,s}), \quad \varphi_k=(\varphi,\phi_{k,s}).
$$
Then, 
\begin{align*}
&J_\theta(u+t\varphi,v+t\psi) \\ 
&= \int_\Omega \left( \sum_{k=1}^\infty  \lambda_{k,s}^\frac{\theta}{2}(u_k+t\varphi_k)\phi_{k,s} 
\sum_{j=1}^\infty \lambda_{j,s}^\frac{2-\theta}{2}(v_j+t\psi_j)\phi_{j,s} -H(x,u+t\varphi,v+t\psi)\right)\,dx.
\end{align*}
Taking the derivative at $t=0$ gives 
\begin{align*}
&\frac{d}{dt}\Big|_{t=0} J_\theta(u+t\varphi,v+t\psi) \\
&=
\int_\Omega \left(  \sum_{k=1}^\infty \lambda_{k,s}^\frac{\theta}{2}\varphi_k \phi_{k,s} \sum_{j=1}^\infty \lambda_{j,s}^\frac{2-\theta}{2}b_j\phi_{j,s}+ \sum_{k=1}^\infty \lambda_{k,s}^\frac{\theta}{2}a_k \phi_{k,s} \sum_{j=1}^\infty \lambda_{j,s}^\frac{2-\theta}{2}\psi_j\phi_{j,s} \right)\,dx\\
&\qquad - \int_\Omega \left( H_u(x,u,v)\varphi + H_v(x,u,v)\psi \right)\,dx\\
&=\int_\Omega \left( A_s^\theta u A_s^{2-\theta} \psi + A_s^\theta\varphi A_s^{2-\theta} v - H_u(x,u,v)\varphi - H_v(x,u,v)\psi\right)\,dx.
\end{align*}
The terms involving the derivatives of $H$ are well defined. Indeed by \eqref{ContEmbed} and H\"older's inequality, 
\begin{align*}
\int_\Omega |u|^{p-1}|\varphi|\,dx 
&\leq \|u\|_{L^p(\Omega)}^{p-1} \|\varphi\|_{L^p(\Omega)} 
\leq	\|u\|_{E_s^\theta}^{p-1} \|\varphi\|_{E_s^\theta}, \\
\int_\Omega |v|^{q\frac{p-1}{p}}|\varphi|\,dx 
&\leq \|v\|_{L^q(\Omega)}^{\frac{(p-1)q}{p}} \|\varphi\|_{L^p(\Omega)}
 \leq	 \|v\|_{E_s^{2-\theta}}^{\frac{(p-1)q}{p}} \|\varphi\|_{E_s^{\theta}}.
\end{align*}
Therefore, using assumption \eqref{H3}   we have that
\begin{align} \label{hu1}
\begin{split}
\int_\Omega |H_u(x,u,v)|\varphi|\,dx 
&\leq C ( \|u\|_{E_s^\theta}^{p-1}  + \|v\|_{E_s^{2-\theta}}^{\frac{(p-1)q}{p}} +1 )  \|\varphi\|_{E_s^\theta}.
\end{split}
\end{align}
Similarly, 
\begin{align} \label{hv1}
\begin{split}
\int_\Omega |H_v(x,u,v)\psi|\,dx 
&\leq C ( \|v\|_{E_s^{2-\theta}}^{q-1} + \|u\|_{E_s^{\theta}(\Omega)}^{\frac{(q-1)p}{q}} +1 )  \|\psi\|_{E_s^{2-\theta}}.
\end{split}
\end{align}
Usual arguments show that $J_\theta$ is Fr\'echet differentiable and $J_\theta'$ is continuous. 
\end{proof}

Following sections 2 and 3 of \cite{DF-F}, we will prove that $J_\theta$ has a critical point applying the 
following minmax Theorem (proved in \cite{F}):

\begin{thm}\label{minimax}
Let $E$ be a Hilbert space such that $E = X \oplus Y$ for some subspaces $X,Y\subset E$. Let $\Phi: E \to \R$ be a $C^1$ functional satisfying the Palais-Smale condition and having the following structure:
$$ 
\Phi(z)=\frac{1}{2}\langle Lz,z \rangle +\mathcal{H}(z), 
$$
where 
\begin{equation}\label{I1} \tag{$I_1$}
L\colon E\to E\text{ is a linear, bounded and self adjoint operator,} 
\end{equation}
\begin{equation}\label{I2} \tag{$I_2$}
\mathcal{H}'\text{ is compact,} 
\end{equation}
and there exist two linear, bounded, invertible operators $B_1,B_2:E\to E$ such that for any $\omega\ge 0$ the operator 
\begin{equation}\label{I3} \tag{$I_3$}
\widehat{B}(\omega)=P_XB^{-1}_1exp(\omega L)B_2\colon X\to X 
\end{equation}
is invertible. Here $P_x$ denotes the projection of $E$ onto $X$.

Furthermore, given $\rho>0$, $z_+\in Y\backslash\{0\}$, $\sigma>\rho/\|B_1^{-1}B_2 z_+\|$ and $M>\rho$, define 
\begin{align*} 
& S=\{B_1z\colon \|z\|=\rho,\, z\in Y \}, \\
& Q=\{B_2(tz_+ + z)\colon 0\leq t\leq\sigma,\, \|z\|\leq M, \, z\in X\}. 
\end{align*} 
Assume eventually there exists $\delta>0$ such that 
\begin{equation}\label{I4}\tag{$I_4$}
\Phi(z)\geq\delta \qquad \forall z\in S,
\end{equation}
\begin{equation}\label{I5}\tag{$I_6$}
\Phi(z)\leq 0 \qquad  \forall z\in\partial Q
\end{equation}
where $\partial Q$ denotes the boundary of $Q$ relative to  $\{B_2(tz_+ + z)\colon t\in \R,\,z\in X\}$.

Then $\Phi$ possesses a critical point with critical value greater or equal than $\delta$.
\end{thm}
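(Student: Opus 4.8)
The plan is to follow the classical linking scheme for strongly indefinite functionals, as developed in \cite{BR} and \cite{H83} and, in the present form, in \cite{F}; I indicate how the hypotheses \eqref{I1}--\eqref{I5} enter and which point is delicate. Write $\Phi'(z)=Lz+\mathcal{H}'(z)$. By \eqref{I1} the operator $L$ generates the uniformly continuous group $\{\exp(tL)\}_{t\in\R}$ of bounded invertible operators, and by \eqref{I2} the nonlinear part $\mathcal{H}'$ is compact, so $\Phi'$ is a compact perturbation of $L$ and $\Phi$ is bounded on bounded subsets of $E$. First I would introduce a class $\Gamma$ of admissible deformations of $\overline Q$, consisting of continuous maps $h\colon\overline Q\to E$ of the form $h(z)=B_2\exp(\omega(z)L)B_2^{-1}z+\kappa(z)$ with $\omega\in C(\overline Q,[0,\infty))$, $\kappa$ compact and $h=\mathrm{id}$ on $\partial Q$, the $B_1,B_2$-bookkeeping being arranged so that the ``linear model'' of each $h\in\Gamma$ is exactly the operator appearing in \eqref{I3} (the sign convention $\omega\ge 0$ being calibrated to the direction of the negative pseudo-gradient flow used below). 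Since $\omega\equiv 0$, $\kappa\equiv 0$ gives $h=\mathrm{id}$, the class $\Gamma$ is nonempty, and the minimax value $c:=\inf_{h\in\Gamma}\sup_{z\in\overline Q}\Phi(h(z))$ is finite.

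The first key step is the linking inequality $c\ge\delta$: it is enough to show that $h(\overline Q)\cap S\neq\emptyset$ for every $h\in\Gamma$, because then $\sup_{\overline Q}\Phi\circ h\ge\inf_S\Phi\ge\delta$ by \eqref{I4}. Fixing $h\in\Gamma$, I would seek $z\in\overline Q$ with $h(z)\in S$. Parametrising $Q$ by $(t,w)\in[0,\sigma]\times\{w\in X:\|w\|\le M\}$ through $z=B_2(tz_++w)$ and $S$ by $B_1y$ with $y\in Y$, $\|y\|=\rho$, and using the splitting $E=X\oplus Y$, the equation $h(z)\in S$ becomes a system on $X$ and on $\R z_+$ whose linearisation, after conjugation by $B_1^{-1}$ and $B_2$, is governed by $\widehat B(\omega)=P_XB_1^{-1}\exp(\omega L)B_2$. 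By \eqref{I3} this operator is invertible for every $\omega\ge 0$, so a Leray--Schauder degree computation on $\overline Q$ --- legitimate since $\kappa$ is compact while the linear part $\exp(\omega L)$ is a bounded invertible (non-compact) model --- shows that the relevant degree is nonzero, while the choices $\sigma>\rho/\|B_1^{-1}B_2z_+\|$ and $M>\rho$ guarantee that no solution lies on $\partial Q$ (where $h=\mathrm{id}$ and $\Phi\le 0<\delta$ by \eqref{I5}, so $h(z)\notin S$ anyway). Hence $h(\overline Q)\cap S\neq\emptyset$ and $c\ge\delta$.

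The second step is to show that $c$ is a critical value. Assume not. Since $\Phi\in C^1$ satisfies Palais--Smale, the standard quantitative deformation lemma provides, for $\varepsilon>0$ small, a deformation $\eta$ with $\eta(\{\Phi\le c+\varepsilon\})\subset\{\Phi\le c-\varepsilon\}$ and $\eta=\mathrm{id}$ off a small neighbourhood of the level $c$; since $c\ge\delta>0$, for $\varepsilon$ small we have $\eta=\mathrm{id}$ on $\partial Q$. Because $\eta$ can be taken as the time-one map of the flow of $-\big(Lz+W(z)\big)$, with $W$ a locally Lipschitz compact vector field approximating $\mathcal{H}'$, Duhamel's formula shows that $\eta$ is of the same type as the maps in $\Gamma$ (a bounded invertible linear model of exponential type plus a compact perturbation), and since such maps compose within $\Gamma$ one gets $\eta\circ h\in\Gamma$ for every $h\in\Gamma$. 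Choosing $h\in\Gamma$ with $\sup_{\overline Q}\Phi\circ h<c+\varepsilon$, the map $\eta\circ h\in\Gamma$ satisfies $\sup_{\overline Q}\Phi\circ(\eta\circ h)\le c-\varepsilon<c$, contradicting the definition of $c$. Therefore $c$ is a critical value and $\Phi$ has a critical point at level $c\ge\delta$.

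I expect the main obstacle to be the linking step, i.e. designing $\Gamma$ so that it is at once rich enough that $\mathrm{id}\in\Gamma$ and that $\Gamma$ is stable under the deformation flow (which makes the final contradiction legitimate), and rigid enough that the intersection problem $h(z)\in S$ collapses to a degree computation whose linear model is precisely $\widehat B(\omega)$. The role of \eqref{I3} is exactly to force that model to be invertible for \emph{every} admissible flow-time $\omega\ge 0$, not merely $\omega=0$; together with the compactness in \eqref{I2}, which makes Leray--Schauder degree applicable despite the non-compact linear part $\exp(\omega L)$, this is what guarantees the degree is nonzero and hence the linking holds.
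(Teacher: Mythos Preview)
The paper does not prove this theorem: it is stated as a black-box result and attributed to \cite{F} (``proved in \cite{F}''), with no argument given beyond the citation. Your sketch is therefore not comparable to anything in the paper itself. That said, the outline you give --- a minimax over a class $\Gamma$ of deformations of the form ``exponential of $L$ plus compact'', a Leray--Schauder degree argument for the linking $h(\overline Q)\cap S\neq\emptyset$ that reduces to the invertibility of $\widehat B(\omega)$ in \eqref{I3}, and a deformation-lemma contradiction using Palais--Smale and stability of $\Gamma$ under the flow --- is precisely the strategy of \cite{F} (and of the earlier \cite{BR}, \cite{H83} on which it builds). So your approach is correct and matches the cited source; the paper simply invokes that source rather than reproducing the argument.
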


In order to be self-contained, we will sketch the proof of all the hypotheses, only providing the proof of those that we consider important. For a more detailed proof, please refer to sections 1, 2, and 3 of \cite{DF-F}.

Consider the Hilbert space $E:=E_s^\theta\times E_s^{2-\theta}$ with the inner product $(.,.)_E$ induced by those of $E^\theta$ and $E^{2-\theta}$. The continuous and symmetric bilinear form $B\colon E\times E\to \R$ defined as
$$ 
B((u,v),(\varphi,\psi))=\int_\Omega \left( A_s^\theta u A_s^{2-\theta} \psi + A_s^\theta\varphi A_s^{2-\theta} v \right)\,dx 
$$
induces the self adjoint bounded linear operator $L\colon E\to E$ defined by 
$$ 
(Lz,\nu)_E=B(z,\nu), 
$$
which is explicitly given by 
\begin{equation}\label{operatorL}
L(u, v) = ((A_s^\theta)^{-1}A_s^{2-\theta}v,(A_s^{2-\theta})^{-1}A_s^\theta u). 
\end{equation}
Introducing  the quadratic form 
$$  
Q(z)=\frac{1}{2}(Lz,z)_{E_s}=\int_\Omega A_s^{\theta}uA_s^{2-\theta}v\, dx \quad \forall\ z=(u,v)\in E, 
$$
and the Nemytskii operator 
$$ 
\mathcal{H}(z) = \int_\Omega H(x,u,v)\,dx \qquad z=(u,v), 
$$
we have 
$$ 
J_\theta(z) = Q(z) - \mathcal{H}(z). 
$$ 
Next, from hypothesis \ref{H0} and \eqref{H3}, and \eqref{ContEmbed} Proposition \ref{JC1}, $\mathcal{H}$ is $C^1$ with $\mathcal{H'}$ compact. Assumptions \eqref{I1} and \eqref{I2} are thus satisfied. 

Now, we focus on the eigenvalue problem for $L$. In view of \eqref{operatorL}, 
\begin{equation}\label{Lz}
L(u,v)=\lambda (u,v)  \Leftrightarrow 
\begin{cases}
(A_s^\theta)^{-1}A_s^{2-\theta}v=\lambda u\\
(A_s^{2-\theta})^{-1}A_s^\theta u=\lambda v.
\end{cases}
\end{equation}
Since $A_s^\theta$ and $A_s^{2-\theta}$ are isomorphisms of $L^2(\Omega)$, $\lambda$ cannot be zero and we get the following equality
\begin{equation}
v=\lambda^2 v.
\end{equation}
Therefore, $\lambda=1$ or $\lambda=-1$ with the corresponding eigenspaces 
$$ 
E^+=\{ (u,(A_s^{2-\theta})^{-1}A_s^\theta u)\colon u\in E_s^{\theta}\} 
$$
and
$$ 
E^-=\{ (u,-(A_s^{2-\theta})^{-1}A_s^\theta u)\colon u\in E_s^{\theta}\}. 
$$
Notice that $E^+$ and $E^-$ are orthogonal for $(\cdot, \cdot)_E$ and $E_s=E_s^+ \oplus E_s^-$. Moreover for $z=z^++z^-$, $z^{\pm}\in E_s^{\pm}$, 
\begin{equation}
B(z^+,z^-)=0, \qquad \frac12 \|z\|_{E_s}^2=Q(z^+)-Q(z^-). 
\end{equation}

Next, we prove the Palais-Smale condition for $J_\theta$.
\begin{prop}
$J_\theta$ satisfies the Palais-Smale condition.
\end{prop}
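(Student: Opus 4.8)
The plan is to verify the Palais--Smale condition by the standard two-step argument: first show that any Palais--Smale sequence is bounded in $E = E_s^\theta\times E_s^{2-\theta}$, and then extract a convergent subsequence using the decomposition $E = E^+\oplus E^-$ together with the compactness of $\mathcal H'$.

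For boundedness, let $z_n=(u_n,v_n)$ satisfy $J_\theta(z_n)\to c$ and $J_\theta'(z_n)\to 0$ in $E^*$. The key algebraic identity is that for large $n$,
$$
J_\theta(z_n)-\tfrac12\Big(\tfrac1p+\tfrac1q\Big)^{-1}\!\cdot\!\big(\text{suitable combination of } J_\theta'(z_n)(\tfrac{u_n}{p},0),\,J_\theta'(z_n)(0,\tfrac{v_n}{q})\big)
$$
produces, after cancellation of the quadratic term (here one uses that $Q$ is bilinear so $Q(z_n)$ is absorbed), an expression of the form
$$
\int_\Omega\Big(\tfrac1p H_u(x,u_n,v_n)u_n+\tfrac1q H_v(x,u_n,v_n)v_n-H(x,u_n,v_n)\Big)\,dx,
$$
which by the superquadraticity hypothesis \eqref{H1} (applied on $\{|(u_n,v_n)|\ge r\}$, with the bounded contribution on $\{|(u_n,v_n)|\le r\}$ controlled using \eqref{H2} and the continuity of $H$) dominates $\int_\Omega H(x,u_n,v_n)\,dx$ up to a constant. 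This yields a bound on $\mathcal H(z_n)=\int_\Omega H(x,u_n,v_n)\,dx$, and hence on $\int_\Omega(|u_n|^p+|v_n|^q)\,dx$ once one also controls the region where $|(u_n,v_n)|\le r$. Then, writing $z_n=z_n^++z_n^-$ and testing $J_\theta'(z_n)$ against $z_n^+-z_n^-$, one gets $\|z_n\|_E^2 = 2\big(Q(z_n^+)-Q(z_n^-)\big) = J_\theta'(z_n)(z_n^+-z_n^-)+\int_\Omega\big(H_u\,(u_n^+-u_n^-)+H_v\,(v_n^+-v_n^-)\big)\,dx$; the first term is $o(1)\|z_n\|_E$, and the second is controlled via \eqref{H3}, H\"older, \eqref{ContEmbed}, and the $L^p\times L^q$ bound just obtained, giving $\|z_n\|_E^2\lesssim o(1)\|z_n\|_E + \|z_n\|_E^{\max(\text{exponents})}$ with all exponents strictly less than $1$ relative to the quadratic left-hand side (this is precisely where \eqref{H3} with the conjugate exponents $(p-1)q/p$, $(q-1)p/q$ matters, since these are all $<$ the critical powers thanks to \eqref{condpq}), hence $\{z_n\}$ is bounded.

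Once $\{z_n\}$ is bounded, pass to a weakly convergent subsequence $z_n\rightharpoonup z$ in $E$. Since $\mathcal H'$ is compact (established just above in the text, via \eqref{H0}, \eqref{H3}, \eqref{ContEmbed} and the compact embeddings into $L^p$, $L^q$), we have $\mathcal H'(z_n)\to\mathcal H'(z)$ strongly in $E^*$, hence (identifying $E^*$ with $E$) strongly in $E$. Now $J_\theta'(z_n)=Lz_n-\mathcal H'(z_n)$, so $Lz_n = J_\theta'(z_n)+\mathcal H'(z_n)\to \mathcal H'(z)$ strongly in $E$. It remains to deduce strong convergence of $z_n$ itself from strong convergence of $Lz_n$: but $L$ acts as $+1$ on $E^+$ and $-1$ on $E^-$, so $L$ is an isometric isomorphism of $E$ (indeed $L^2=\mathrm{Id}$ by the eigenvalue computation \eqref{Lz}), whence $z_n = L(Lz_n)\to L\mathcal H'(z)$ strongly in $E$. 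This gives a strongly convergent subsequence and proves the Palais--Smale condition.

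The main obstacle is the boundedness step, and within it the correct handling of the superquadratic term: one must combine \eqref{H1} and \eqref{H2} to bound $\int_\Omega H(x,u_n,v_n)\,dx$ (equivalently $\int_\Omega |u_n|^p+|v_n|^q$) from the Palais--Smale data despite the quadratic form $Q$ being indefinite. The indefiniteness is precisely why one tests against the asymmetric combination $(\tfrac{u_n}{p},\tfrac{v_n}{q})$ rather than against $z_n$: this choice makes the $Q$-contribution $Q(z_n)$ appear with the coefficient needed to cancel it against $J_\theta(z_n)$, leaving only the good term controlled by \eqref{H1}. After that, the splitting $z_n=z_n^++z_n^-$ converts the $L^p\times L^q$ bound into the full $E$-norm bound, and the compactness of $\mathcal H'$ plus $L^2=\mathrm{Id}$ makes the convergence step essentially automatic.
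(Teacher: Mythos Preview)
Your argument is correct and follows the same overall strategy as the paper: use the asymmetric test vector $\eta_n=\frac{pq}{p+q}\bigl(\tfrac{u_n}{p},\tfrac{v_n}{q}\bigr)$ to cancel the quadratic part and obtain $\int_\Omega(|u_n|^p+|v_n|^q)\,dx\le C(1+\|z_n\|_E)$ (note: the bound is not absolute but linear in $\|z_n\|_E$, which is consistent with your later remark on ``exponents strictly less than $1$''; your factor $\tfrac12$ in front of $(\tfrac1p+\tfrac1q)^{-1}$ is spurious but harmless). For the convergence step, your use of $L^2=\mathrm{Id}$ is exactly the invertibility of $L$ invoked by the paper.

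The one genuine difference is in how you pass from the $L^p\times L^q$ control to the $E$-norm bound. You test $J_\theta'(z_n)$ against $z_n^+-z_n^-$ coming from the decomposition $E=E^+\oplus E^-$, obtaining $\|z_n\|_E^2$ on the left and then closing with a sublinear inequality. The paper instead tests separately against $(\varphi,0)$ and $(0,\psi)$, and uses that $A_s^\theta\colon E_s^\theta\to L^2$ is an isometry to bound $\|v_n\|_{E_s^{2-\theta}}$ and $\|u_n\|_{E_s^\theta}$ directly in terms of $\|u_n\|_{L^p}$, $\|v_n\|_{L^q}$ alone (inequalities \eqref{Eq300}--\eqref{Eq301}); it then substitutes these back into \eqref{Eq200}. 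Both routes are standard and yield the same conclusion; the paper's avoids invoking the $E^\pm$ splitting at this stage, while yours is closer to the abstract structure of strongly indefinite functionals.
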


\begin{proof}
Let $\{(u_n,v_n)\}_{n\in\N}$ be a sequence in $E_s^{\theta}\times E_s^{2-\theta}$ such that 
$$ 
|J_\theta(u_n,v_n)|\leq C\text{ and }J'_\theta(u_n,v_n)\to0\text{ as }n\to\infty. 
$$ 
The condition on the derivative implies that there is a sequence $\{\varepsilon_n\}$ converging to 0 such that
\begin{equation}\label{PSDer} 
|(J'_\theta(u_n,v_n),\eta)|\leq \varepsilon_n \|\eta\|_{E_s^{\theta}\times E_s^{2-\theta}}
\quad \forall\eta\in E_s^{\theta}\times E_s^{2-\theta}.
\end{equation}
Take 
$$ 
\eta_n=\frac{pq}{p+q}\left(\frac{u_n}{p},\frac{v_n}{q}\right),  
$$
we get 
\begin{align*}
c+\varepsilon_n\|\eta_n\|
& \ge J_\theta(u_n,v_n) - J'_\theta(u_n,v_n)\eta_n  \\ 
& \geq\frac{pq}{p+q}\int_\Omega\frac{1}{p} H_u(u_n,v_n,x)u_n 
+\frac{1}{q}H_v(u_n,v_n,x)v_n-H(u_n,v_n,x)\,dx\\
&\hspace{0.5cm} + \left(\frac{pq}{p+q}-1\right)\int_\Omega H(u_n,v_n,x)\,dx.
\end{align*}
In view of  \eqref{H1} and \eqref{condpq.0} we deduce that 
$$ 
C(1+\|(u_n,v_n)\|)\geq\int_\Omega H(u_n,v_n,x)\,dx. 
$$ 
Independently, it follows from \eqref{H1} that $H(x,u,v)\ge C(|u|^p+|v|^q)-C$. Thus  
\begin{equation}\label{Eq200} 
\int_\Omega |u_n|^p+|v_n|^q\,dx \leq C(1+\|u_n\|_{E_s^\theta}+\|v_n\|_{E_s^{2-\theta}} ). 
\end{equation} 

Next, taking $\eta=(\phi,0)$ with $\phi\in E_s^\theta$ in \eqref{PSDer} gives 
$$ 
\left|\int_\Omega A_s^\theta \phi A_s^{2-\theta} v_n\, dx\right| \leq \int_\Omega\left|H_u(u_n,v_n,x)\phi\right|\, dx+\varepsilon_n\|\phi\|_{E_s^\theta}. 
$$
We can bound the integral in the r.h.s. using \eqref{hu1} to get 
$$
\left|\int_\Omega A_s^\theta \phi A_s^{2-\theta} v_n\, dx\right| \leq c_4(\|u_n\|^{p-1}_{L^p(\Omega)}+\|v_n\|^{\frac{(p-1)q}{p}}_{L^q(\Omega)}+1)\|\phi\|_{E_s^\theta}	\qquad \phi\in E_s^\theta.
$$
Since $A_s^\theta$ is an isometry between $E_s^\theta$ and $L^2$, we can rewrite this inequalty as 
$$
(A_s^{2-\theta} v_n,\psi)_{L^2} \leq c_4(\|u_n\|^{p-1}_{L^p(\Omega)}+\|v_n\|^{\frac{(p-1)q}{p}}_{L^q(\Omega)}+1)\|\psi\|_{L^2}	 \qquad \psi\in L^2
$$
from which it follows that 
\begin{equation}\label{Eq300}
\|v_n\|_{E_s^{(2-\theta)}} = \|A_s^{2-\theta} v_n\|_{L^2}  \leq c_4(\|u_n\|^{p-1}_{L^p(\Omega)}+\|v_n\|^\frac{(p-1)q}{p}_{L^q(\Omega)}+1).
\end{equation} 
Analogously,
\begin{equation}\label{Eq301}
\|u_n\|_{E_s^{\theta}} \leq c_5(\|v_n\|^{q-1}_{L^q(\Omega)}+\|u_n\|^\frac{p(q-1)}{q}_{L^q(\Omega)}+1).
\end{equation} 
Plugging these two estimates in \eqref{Eq200} gives 
\begin{align*}
\|u_n\|_p^p + \|v_n\|_q^q  \le C(1+\|u_n\|^{p-1}_{L^p(\Omega)}+\|v_n\|^{\frac{q(p-1)}{p}}_{L^q(\Omega)} +\|v_n\|^{q-1}_{L^q(\Omega)}+\|u_n\|^{\frac{p(q-1)}{q}}_{L^q(\Omega)}). 
\end{align*} 
It follows that the sequence $\{(u_n,v_n) \}$ is bounded in $L^p(\Omega)\times L^p(\Omega)$. Coming back to \eqref{Eq300}-\eqref{Eq301}, we finally deduce that $\{(u_n,v_n) \}$ is bounded in $E_s^\theta\times E_s^{2-\theta}$.

Finally, from the compactness of $\mathcal{H'}$ and  the invertibility of $L$ there exists a subsequence that converge in $E_s^\theta\times E_s^{2-\theta}$.	
\end{proof}

Now, we consider the study of geometric characteristics of $J_\theta$ . 

The orthogonal projections $P^{\pm}\colon E_s^\theta\times E_s^{2-\theta}\to E^{\pm}$ are defined as
\begin{equation*}
P^{\pm}(u,v)=\frac{1}{2}(u\pm A^{-\theta}A^{2-\theta}v,v\pm A^{-(2-\theta)}A^{\theta}u).
\end{equation*}
where we denoted $A^{-\theta}=(A^{\theta})^{-1}$ and $A^{-(2-\theta)}=(A^{2-\theta})^{-1}$.

Now we choose numbers $\mu>1,\nu>1$ such that
\begin{equation}\label{munu}
\frac{1}{p}<\frac{\mu}{\mu+\nu}\text{ and }\frac{1}{q}<\frac{\nu}{\mu+\nu}.
\end{equation}
and define $B_1(u,v)=(\rho^{\mu -1}u,\rho^{\nu-1}v)$, $B_2(u,v)=\left(\sigma^{\mu-1}u, \sigma^{\nu-1}v\right)$. Eventually we take some $(u_+, v_+)\in E^{+}$ with $u_+$ an eigenvector of $-\Delta^s$. Then \eqref{I4}-\eqref{I5} hold. Finally, we can conclude that $J_\theta$ satisfies all the hypotheses of Theorem \ref{minimax}. Therefore, it has a critical point.

\section{Extensions.}\label{S5}
The proof of \cite{DF-F} in which our existence result Theorem \ref{MainThm} is based, can be adapted to a wider class of operators. Indeed we can replace $(-\Delta)^s$ by a symmetric operator $\mathcal{L}\colon Dom(\mathcal{L}) \subset L^2(\Omega)\to L^2(\Omega)$ that can be diagonalized with positive eigenvalues in an orthonormal basis $\{\phi_k\}$ of $L^2(\Omega)$. Assume also $Dom(\mathcal{L})$ contains $C^\infty_c(\Omega)$ and is continuously embedded into $H^{2s}(\Omega)$ for some $s>0$. We can then consider as in section \ref{InterSpaces} the fractional power $A_s^\theta \colon \D(A_s^\theta)\subset L^2(\Omega)\to L^2(\Omega)$, $\theta\in [0,2]$, of $\mathcal{L}^\frac12$. Choosing $p,q$ satisfying \eqref{condpq}, the same proof as before gives the existence of a distributional solution to the system 
\begin{align*} 
& \mathcal{L}u = H_u(x,u,v), \\ 
& \mathcal{L}v = H_v(x,u,v) 
\end{align*} 
(the boundary condition is incorporated in the definition of $Dom(\mathcal{L})$). Here distributional solution means that 
\begin{align*} 
& (u,\mathcal{L}\phi)_{L^2} = (H_u(x,u,v),\phi)_{L^2}, \\ 
& (v,\mathcal{L}\phi)_{L^2} = (H_v(x,u,v),\phi)_{L^2}
\end{align*} 
for any $\phi\in C^\infty_c(\Omega)$. 

As examples of such $\mathcal{L}$ we can consider the following immediate generalization of $(-\Delta)^s$: 
$$ 
\mathcal{L}_\mathcal{K}u(x) = PV \int_{\R^n} (u(x)-u(y))\mathcal{K}(x,y)\,dy 
$$ 
where $\mathcal{K}\colon \Omega\times \Omega\backslash \{(x,x)\colon x\in \Omega\}\to \R$ is symmetric and satisfies $C|x-y|^{-(n+2s)}\le \mathcal{K}(x,y)\le C'|x-y|^{-(n+2s)}$. Lower order perturbation can also be added: 
$$ 
\mathcal{L}u(x) = \mathcal{L}_\mathcal{K}u(x)  + \mathcal{L}_\mathcal{K'}u(x) + au(x) 
$$ 
where $\mathcal{K'}$ is as $\mathcal{K}$ with a $s'<s$, and $a$ is a $L^\infty$ function such that $\inf\,a>-\lambda_1$  where $\lambda_1$ is 1st eigenvalue of $\mathcal{L}_\mathcal{K}u(x)  + \mathcal{L}_\mathcal{K'}u(x)$ (so that $\mathcal{L}$ is positive). 

We can also consider local-non local operator like 
$$ 
\mathcal{L}u = -\Delta u + \mathcal{L}_\mathcal{K}u(x)  + a u  
$$ 
or 
$$ 
\mathcal{L}u = -\Delta u  +\Delta_J u  + a u 
$$ 
where 
$$  
\Delta_J u(x) = J*u(x)-u(x) = \int J(x-y)(u(y)-u(x))dy 
$$ 
for some smooth even non-negative function $J$ with compact support and $\int J=1$. 

The proofs for these extensions are easy adaptations of the ones presented here and the details are left to the interested reader.

\section*{Acknowledgements}

This work was partially supported by ANPCyT under grant PICT 2019-3837 and by CONICET under grant PIP 11220150100032CO. The authors are members of CONICET and are grateful for the support.

\end{document}